\newtheorem{theorem}{Theorem}[section]
\newtheorem{lemma}[theorem]{Lemma}
\newtheorem{cor}[theorem]{Corollary}
\theoremstyle{definition}
\theoremstyle{remark}
\newtheorem{remark}[theorem]{\bf{Remark}}
\numberwithin{equation}{section}
\begin{document}
		\title [Davis-Wielandt-Berezin radius inequalities]  { \Small{ Davis-Wielandt-Berezin radius inequalities of Reproducing kernel Hilbert space operators} }
	
	\author[ A. Sen, P. Bhunia, K. Paul] {Anirban Sen, Pintu Bhunia, Kallol Paul}

	\address [Sen] {Department of Mathematics, Jadavpur University, Kolkata 700032, West Bengal, India}
	\email{anirbansenfulia@gmail.com}
	
	\address [Bhunia] {Department of Mathematics, Jadavpur University, Kolkata 700032, West     Bengal, India}
	\email{pintubhunia5206@gmail.com}
	\email{pbhunia.math.rs@jadavpuruniversity.in}

	\address[Paul] {Department of Mathematics, Jadavpur University, Kolkata 700032, West Bengal, India}
	\email{kalloldada@gmail.com}
	\email{kallol.paul@jadavpuruniversity.in}
	
	\thanks{Mr. Anirban Sen would like to thank CSIR, Govt. of India for the financial support in the form of Junior Research Fellowship under the mentorship of Prof Kallol Paul. Mr. Pintu Bhunia sincerely acknowledges the financial support received from UGC, Govt. of India in the form of Senior Research Fellowship under the mentorship of Prof Kallol Paul.}
	
	\subjclass[2010]{47A30, 15A60, 47A12}
	
	\keywords{Davis-Wielandt radius, Berezin norm, Berezin number, Reproducing kernel Hilbert space}
	
	\maketitle
	\begin{abstract}
Several upper and lower bounds of the Davis-Wielandt-Berezin radius of bounded linear operators defined on a reproducing kernel Hilbert space are given. Further, an inequality involving the Berezin number and the Davis-Wielandt-Berezin radius for the sum of two bounded linear operators is obtained, namely, if $A $ and $B$ are reproducing kernel Hilbert space operators, then
$$\eta(A+B) \leq \eta(A)+\eta(B)+\textbf{ber}(A^*B+B^*A),$$ where
$\eta(\cdot)$ and $\textbf{ber}(\cdot)$ are the Davis-Wielandt-Berezin radius and the Berezin number, respectively.
	\end{abstract}

	\section{\textbf{Introduction}}
	\noindent
	Let  $\mathbb{H}$ be a complex Hilbert space with the inner product $\langle.,.\rangle$ and $\|\cdot \|$ denotes the norm induced from the inner product. A bounded linear operator $A$ defined on $\mathbb{H}$ is said to be positive if $ \langle Ax, x \rangle \geq 0$ for each $ x \in \mathbb{H}.$ Let $\mathcal{L}(\mathbb{H})$ be the set of all bounded linear operators on  $\mathbb{H}.$ For  $A \in \mathcal{L}(\mathbb{H}),$ $A^*$ denotes the adjoint of $A$.    For  $A \in \mathcal{L}(\mathbb{H}),$ $|A|$ denotes the positive operator $(A^*A)^{1/2}.$ Every $A \in \mathcal{L}(\mathbb{H}) $ can be decomposed as,
	$A=\Re(A)+ { i }\Im(A),$
	where $\Re(A)=\frac{ A + A^*}{2} $ and $ \Im(A) = \frac{A - A^*}{2i}.$ This decomposition is called the Cartesian decomposition of $A.$ For any  $A \in \mathcal{L}(\mathbb{H}),$ its numerical range $W(A)$ is the subset $\big\{\langle Ax,x \rangle~:~ x \in \mathbb{H}, \|x\|=1\big\}$ of the complex plane  $\mathbb{C}.$ The numerical radius of $A$, denoted by $w(A)$, is defined as
	$ w(A) = \sup \left\{ |\lambda|~:~ \lambda \in W(A) \right\}.$ 
	Recall that the operator norm of $A \in \mathcal{L}(\mathbb{H})$ is defined by $\|A\|=\sup\left\{\|Ax\|~:~ x \in \mathbb{H}, \|x\|=1\right\}.$ 
	It is easy to verify that $w(.)$ defines a norm on  $\mathcal{L}(\mathbb{H}).$ Furthermore, it is equivalent to the operator norm on  $\mathcal{L}(\mathbb{H})$, satisfying 
	\begin{eqnarray*}
		\frac{1}{2}\|A\| \leq w(A) \leq \|A\|.
	\end{eqnarray*}
	For few recent non-trivial refinements of the above inequalities we refer the readers to see \cite{BBP20MIA,BP21ADM,BP21LAA,BP21BSM,BP21RIM,FK, FK2,TY}.  The importance of the study of the numerical range has led to its generalizations in different directions,
	one of them is the Davis-Wielandt shell. For any $A \in \mathcal{L}(\mathbb{H}),$ the Davis–Wielandt shell, denoted by $DW(A)$ and Davis–Wielandt
	radius, denoted by  $dw(A).$  are respectively defined as
	\begin{eqnarray*}
		DW(A)=\left\{\Big(\langle Ax,x \rangle,\|Ax\|^2\Big)~:~ x \in \mathbb{H}, \|x\|=1 \right\}
	\end{eqnarray*}
    and
	\begin{eqnarray*}
    	dw(A)=\sup\left\{\sqrt{|\langle Ax,x \rangle|^2+\|Ax\|^4}~:~ x \in \mathbb{H}, \|x\|=1 \right\}.
    \end{eqnarray*}
  
	Clearly, $dw(\cdot)$ is not a norm on  $\mathcal{L}(\mathbb{H})$ and it
	satisfies that the following inequality:
	\begin{eqnarray}\label{eqn0}
	\max{\{w(A),\|A\|^2\}} \leq dw(A) \leq \sqrt{w^2(A)+\|A\|^4}.
	\end{eqnarray}
In recent years  the Davis–Wielandt shell and the Davis–Wielandt radius inequalities have been studied by  many mathematicians, the readers can see \cite{BBP21BMMS,BBBP21AOFA,LP09GMJ,LPS08OM,ZS20MJM} and the references therein for recent results.

Next we turn our attention to a  reproducing kernel Hilbert space. A  reproducing kernel Hilbert space $\mathscr{H}=\mathscr{H}(\Omega)$ is a Hilbert space of all complex valued functions on a non-empty set $\Omega,$ which has the property that point evaluations are continuous, i.e., for every $\lambda \in \Omega$ the map $ E_{\lambda} : \mathscr{H} \to \mathbb{C}$  defined by $E_{\lambda}(f)=f(\lambda),$ is continuous. Throughout the article, a reproducing kernel Hilbert space on the set $\Omega$ is denoted by $\mathscr{H}.$ Riesz representation theorem ensures that for each $\lambda \in \Omega$ there exists a unique $k_{\lambda} \in \mathscr{H}$ such that $f(\lambda)=\langle f,k_{\lambda} \rangle,$ for all $f \in \mathscr{H}.$ The collection of $k_\lambda$ for all $\lambda \in \Omega$ is called  the reproducing kernel of $\mathscr{H}$ and the collection of $\hat{k}_{\lambda}=k_\lambda/\|k_\lambda\|$ for all $\lambda \in \Omega$ is called the normalized reproducing kernel of $\mathscr{H}.$ For any $A \in \mathcal{L}(\mathscr{H}),$ the Berezin symbol of $A$ is a function $\widetilde{A}$  on $\Omega$ defined as, $\widetilde{A}(\lambda)=\langle A\hat{k}_{\lambda},\hat{k}_{\lambda} \rangle,$  for each $\lambda \in \Omega,$  which was introduced by Berezin  \cite{BER1,BER2}. The Berezin set of $A$  is denoted by $\textbf{Ber}(A)$ and is defined as 
$		\textbf{Ber}(A)=\{\widetilde{A}(\lambda) : \lambda \in \Omega\}.$
	 The Berezin number (  see \cite{KAR06}) of $A,$ denoted by $ \textbf{ber}(A)  $ and the Berezin norm of $A,$  denoted by 	$\|A\|_{ber} ,$ are respectively  defined as 
	\begin{eqnarray*}
		\textbf{ber}(A) = \sup \left\{ \big| \widetilde{A}(\lambda) \big| : \lambda \in \Omega\right\}~~\mbox{and}~~
    	\|A\|_{ber}=\sup \left\{ \big|\langle A\hat{k}_{\lambda},\hat{k}_{\mu} \rangle\big| : \lambda, \mu \in \Omega\right\}.
    \end{eqnarray*}
    In \cite{BSP21ARXIV}, it is  proved that	
    $\left\|A\right\|_{ber}=  \textbf{ber}\left(A\right),$ if  $A\in \mathcal{L}(\mathscr{H})$ is positive. 
    Likewise as $\textbf{ber}(A),$ the least Berezin number of $A,$ denoted by $c(A)$, is defined as  $c(A)=\inf \left\{ \big| \widetilde{A}(\lambda) \big| : \lambda \in \Omega\right\}.$ It is clear from the definition that $ \textbf{Ber}(A) \subseteq W(A) $ and so $ \textbf{ber}(A) \leq w(A).$
    The Berezin number inequalities have been studied by many mathematicians over the years, for the latest and recent results we refer the readers to see \cite{b0,hlb,Bm,Trd} and the references therein. 
    
     Motivated by the Davis-Wielandt shell of a bounded linear operator on a Hilbert space, we here study the same (see \cite {dwb}) in the setting of a reproducing kernel Hilbert space. The Davis-Wielandt-Berezin shell of $A \in \mathcal{L}(\mathscr{H})$, denoted by $DW_{ber}(A)$, is defined as
    \begin{eqnarray*}
    	DW_{ber}(A)=\left\{\Big(\widetilde{A}(\lambda), \|A\hat{k}_{\lambda}\|^2 \Big)~:~\lambda \in \Omega \right\}.
    \end{eqnarray*}
 For $A \in \mathcal{L}(\mathscr{H}),$ the Davis-Wielandt-Berezin radius of $A$, denoted by $\eta(A)$,  is defined as
     \begin{eqnarray*}
      \eta(A)=\sup\left\{\sqrt{|\widetilde{A}(\lambda)|^2+\|A\hat{k}_{\lambda}\|^4}~:~\lambda \in \Omega \right\}.
    \end{eqnarray*}
	Likewise $dw(\cdot),$ it is easy to check that $\eta(\cdot)$ is not a norm on  $\mathcal{L}(\mathscr{H})$ and satisfies $\eta(A) \leq dw(A)$ for all $A \in \mathcal{L}(\mathscr{H}).$
	It is clear from the definition that for any  $A \in \mathcal{L}(\mathscr{H}),$ the Davis-Wielandt-Berezin radius satisfies the following inequality:
	\begin{eqnarray}\label{eqn1}
	\max\{ \textbf{ber}(A), \|A^*A\|_{ber}\} \leq \eta(A) \leq \sqrt{\textbf{ber}^2(A)+ \|A^*A\|^2_{ber}}.
	\end{eqnarray}
	
	In this paper,  we obtain upper and lower bounds for the Davis-Wielandt-Berezin radius of bounded linear operators on a reproducing kernel Hilbert space $\mathscr{H}.$
	We obtain an upper bound for the Davis-Wielandt-Berezin radius of the sum of two bounded linear operators. The bounds obtained here improve on the earlier ones studied in \cite{dwb}.

	\section{\textbf{Main Results}}
	
To reach our goal in this present article we begin with the following sequence of lemmas.

\begin{lemma}$($\cite{Simon}$)$.\label{lemma1}
		Let $A\in \mathcal{L}(\mathscr{H})$ be positive and $x\in \mathscr{H}$ with $\|x\|=1.$ Then  for all $r \geq 1,$
		\begin{eqnarray*}
		   \langle Ax,x\rangle^r\leq \langle A^rx,x\rangle.
		\end{eqnarray*}
	
	\end{lemma}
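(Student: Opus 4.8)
The plan is to reduce the operator inequality to a scalar statement about a probability measure by means of the spectral theorem, and then to invoke convexity. Since $A$ is positive, its spectrum is contained in $[0,\|A\|]\subset[0,\infty)$, so the spectral theorem furnishes a projection-valued measure $E(\cdot)$ with $A=\int_0^{\|A\|} t\,dE(t)$, and the continuous functional calculus gives $A^r=\int_0^{\|A\|} t^r\,dE(t)$ for every real $r\ge 1$ (here the positivity of $A$ is used to make $t\mapsto t^r$ a genuine continuous function on the spectrum). Fixing the unit vector $x$, I would define the Borel measure $\mu$ on $[0,\|A\|]$ by $d\mu(t)=d\langle E(t)x,x\rangle$. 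Because $E$ is projection-valued and $\|x\|=1$, one has $\mu\big([0,\|A\|]\big)=\langle x,x\rangle=1$, so $\mu$ is a probability measure; this is the step where the hypothesis $\|x\|=1$ is essential.

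With this notation both sides become ordinary integrals, namely $\langle Ax,x\rangle=\int_0^{\|A\|} t\,d\mu(t)$ and $\langle A^rx,x\rangle=\int_0^{\|A\|} t^r\,d\mu(t)$, so that the desired inequality $\langle Ax,x\rangle^r\le\langle A^rx,x\rangle$ is precisely
\[
\left(\int_0^{\|A\|} t\,d\mu(t)\right)^{r}\le \int_0^{\|A\|} t^r\,d\mu(t).
\]
Since the function $\phi(s)=s^r$ is convex on $[0,\infty)$ for every $r\ge 1$, Jensen's inequality applied to the probability measure $\mu$ yields $\phi\big(\int t\,d\mu\big)\le\int\phi(t)\,d\mu$, which is exactly the displayed estimate. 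Alternatively, one may argue by Hölder's inequality: writing $\int t\,d\mu=\int t\cdot 1\,d\mu\le\big(\int t^r\,d\mu\big)^{1/r}\big(\int 1\,d\mu\big)^{1/q}$ with $\frac1r+\frac1q=1$ and using $\mu\big([0,\|A\|]\big)=1$, then raising both sides to the power $r$.

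I do not expect a serious obstacle here, as the result is classical; the only points demanding care are the verification that $\mu$ is genuinely a probability measure (which encodes $\|x\|=1$) and the fact that both the convexity of $\phi$ and the validity of the power-mean/Hölder step require simultaneously $r\ge 1$ and the positivity of $A$, so that the integration takes place over $[0,\infty)$ where $s\mapsto s^r$ is convex. For $r=1$ the inequality degenerates to an equality, consistent with $\phi$ being linear in that case, which serves as a useful sanity check.
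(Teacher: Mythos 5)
Your argument is correct: passing to the spectral measure $d\mu(t)=d\langle E(t)x,x\rangle$, observing that $\|x\|=1$ makes $\mu$ a probability measure, and applying Jensen's inequality to the convex function $s\mapsto s^{r}$ (or, equivalently, H\"older's inequality) is the standard proof of this H\"older--McCarthy inequality. The paper itself supplies no proof, quoting the lemma directly from Simon's book, so there is nothing to compare against beyond noting that your route is the classical one and is complete as written.
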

	
	\begin{lemma}$($\cite{F}$)$.\label{lemma2}
		Let $A\in \mathcal{L}(\mathscr{H})$ and $x, y\in \mathscr{H}$. Then for all $\alpha \in [0,1],$
		\[|\langle Ax,y\rangle|^2\leq \langle |A|^{2\alpha}x,x\rangle~~\langle |A^*|^{2(1-\alpha)}y,y\rangle.\] 
	\end{lemma}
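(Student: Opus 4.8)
The plan is to reduce the claimed mixed Schwarz inequality to the ordinary Cauchy--Schwarz inequality by exploiting the polar decomposition of $A$. Writing $A = U|A|$, where $U$ is the partial isometry with initial space $\overline{\mathrm{ran}}\,|A|$ and final space $\overline{\mathrm{ran}}\,A$, I would first record the functional-calculus identity $|A^*|^{r} = U|A|^{r}U^*$, valid for every $r > 0$. This is the technical backbone of the argument and is the step I expect to be the main obstacle, since it requires checking that $U^*U$ acts as the identity on $\overline{\mathrm{ran}}\,|A|$, whence $(U|A|^{2}U^*)^{1/2} = U|A|U^*$ gives $|A^*| = U|A|U^*$; the general power then follows by functional calculus.

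With that in hand, fix $\alpha \in [0,1]$ and split the positive operator $|A|$ as $|A| = |A|^{1-\alpha}|A|^{\alpha}$, using that fractional powers of a positive operator commute and add exponents. Then, moving the self-adjoint factor $|A|^{\alpha}$ and the operator $U|A|^{1-\alpha}$ across the inner product, I obtain
\[\langle Ax, y\rangle = \langle U|A|^{1-\alpha}|A|^{\alpha}x, y\rangle = \langle |A|^{\alpha}x,\, |A|^{1-\alpha}U^*y\rangle,\]
where I have used $(U|A|^{1-\alpha})^* = |A|^{1-\alpha}U^*$.

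Applying the Cauchy--Schwarz inequality to the right-hand side gives
\[|\langle Ax, y\rangle|^2 \le \big\||A|^{\alpha}x\big\|^2\,\big\||A|^{1-\alpha}U^*y\big\|^2 = \langle |A|^{2\alpha}x, x\rangle\,\langle U|A|^{2(1-\alpha)}U^*y, y\rangle.\]
Finally I would invoke the identity from the first paragraph with $r = 2(1-\alpha)$ to rewrite $\langle U|A|^{2(1-\alpha)}U^*y, y\rangle = \langle |A^*|^{2(1-\alpha)}y, y\rangle$, which yields exactly the asserted bound $|\langle Ax, y\rangle|^2 \le \langle |A|^{2\alpha}x, x\rangle\,\langle |A^*|^{2(1-\alpha)}y, y\rangle$. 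The boundary cases $\alpha = 0$ and $\alpha = 1$ collapse to the usual Schwarz inequality and can be verified directly, so the only substantive content is the polar-decomposition identity together with a single application of Cauchy--Schwarz.
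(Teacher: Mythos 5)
The paper does not prove this lemma; it is quoted directly from Furuta's paper \cite{F}, so there is no in-text argument to compare against. Your proposal is correct and is in fact the standard proof of this mixed Schwarz inequality: polar decomposition $A=U|A|$, the splitting $|A|=|A|^{1-\alpha}|A|^{\alpha}$, one application of Cauchy--Schwarz, and the identity $U|A|^{r}U^{*}=|A^{*}|^{r}$. The only point requiring care is the one you already flag: that identity is obtained by applying the continuous functional calculus to $|A^{*}|^{2}=AA^{*}=U|A|^{2}U^{*}$ with the function $t\mapsto t^{r/2}$, which is legitimate for $r>0$ precisely because this function vanishes at $0$ (so it is a uniform limit of polynomials without constant term, and $U(\cdot)U^{*}$ is multiplicative on such expressions thanks to $U^{*}U$ acting as the identity on $\overline{\mathrm{ran}}\,|A|$); the excluded case $\alpha=1$, where the exponent degenerates to $0$ and $UU^{*}\neq I$ in general, is correctly disposed of by your direct verification of the endpoints.
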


     \begin{lemma}$($\cite{buzano}$)$.\label{lemma3}
      	Let $x,y,e\in \mathscr{H}$ with $\|e\|=1.$ Then 
    	\[|\langle x,e\rangle \langle e,y\rangle|\leq \frac{1}{2}\left(\|x\| \|y\|+|\langle x,y\rangle|\right).\]
    \end{lemma}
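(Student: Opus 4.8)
The plan is to encode the scalar $\langle x,e\rangle\langle e,y\rangle$ as a sesquilinear form coming from the rank-one orthogonal projection onto $\mathrm{span}\{e\}$, and then to replace that projection by the associated reflection, whose norm is exactly $1$. First I would define the operator $P\in\mathcal{L}(\mathscr{H})$ by $Pz=\langle z,e\rangle e$. Since $\|e\|=1$, a direct computation gives $P^2=P$ and $P^*=P$, so $P$ is a self-adjoint (orthogonal) projection, and moreover
\[
\langle Px,y\rangle=\langle x,e\rangle\langle e,y\rangle ,
\]
so that the left-hand side of the claimed inequality is exactly $|\langle Px,y\rangle|$.

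The key step is to pass from $P$ to the reflection $U=2P-I$. Because $P$ is a self-adjoint projection, $U$ is self-adjoint and satisfies $U^2=4P^2-4P+I=I$; hence $U$ is a self-adjoint unitary, and in particular $\|U\|=1$. Applying the Cauchy--Schwarz inequality together with $\|U\|=1$ yields
\[
\big|\langle(2P-I)x,y\rangle\big|=|\langle Ux,y\rangle|\le\|Ux\|\,\|y\|\le\|x\|\,\|y\| .
\]

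To finish, I would write $2\langle Px,y\rangle=\langle(2P-I)x,y\rangle+\langle x,y\rangle$ and apply the triangle inequality in $\mathbb{C}$:
\[
2\,|\langle Px,y\rangle|\le\big|\langle(2P-I)x,y\rangle\big|+|\langle x,y\rangle|\le\|x\|\,\|y\|+|\langle x,y\rangle| .
\]
Dividing by $2$ and recalling $\langle Px,y\rangle=\langle x,e\rangle\langle e,y\rangle$ gives the assertion. The one genuinely clever point, and the step I expect to be the main obstacle for someone meeting this inequality for the first time, is recognizing that the right object to estimate is not the projection $P$ itself, which would only reproduce the plain Cauchy--Schwarz bound $\|x\|\,\|y\|$ and lose both the factor $\frac{1}{2}$ and the term $|\langle x,y\rangle|$, but the reflection $2P-I$, whose unit norm is precisely what produces the sharp averaged bound. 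Everything else reduces to the routine verification of the algebraic identities $P^2=P$, $P^*=P$, and $(2P-I)^2=I$.
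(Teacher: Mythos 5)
Your proof is correct. The paper does not prove this lemma at all --- it is quoted directly from Buzano's paper \cite{buzano} as a known auxiliary result --- so there is no internal argument to compare against. Your verification is sound at every step: $P z=\langle z,e\rangle e$ is indeed a self-adjoint idempotent because $\|e\|=1$, the reflection $U=2P-I$ satisfies $U^{2}=I$ and hence $\|U\|=1$, and the identity $2\langle Px,y\rangle=\langle Ux,y\rangle+\langle x,y\rangle$ combined with Cauchy--Schwarz and the triangle inequality yields exactly the stated bound. This reflection argument is the standard modern proof of Buzano's inequality (Buzano's original 1974 argument is more computational, working directly with the decomposition of $x$ and $y$ along $e$ and its orthogonal complement); your version is cleaner and correctly identifies why the factor $\tfrac{1}{2}$ and the extra term $|\langle x,y\rangle|$ appear, namely from averaging $U$ with the identity.
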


   \begin{lemma}$($\cite[P. 26]{hard}$)$.\label{lemma4}
  	For $a,b\geq 0,$ $0<\alpha<1,$ and $r\neq 0,$ let $M_{r}(a,b,\alpha)=\left(\alpha a^r+(1-\alpha)b^r \right)^{1/r}$ and $M_{0}(a,b,\alpha)=a^{\alpha}b^{1-\alpha}.$ Then for $r \leq s,$
  	$$M_{r}(a,b,\alpha)\leq M_{s}(a,b,\alpha).$$ 
  \end{lemma}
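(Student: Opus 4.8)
The plan is to show that $r\mapsto M_r(a,b,\alpha)$ is non-decreasing by reducing each comparison to a single application of Jensen's inequality for a power function, and then stitching together the ranges of positive, zero, and negative exponents. I will assume $a,b>0$; the cases where $a$ or $b$ vanishes are either trivial or recovered by continuity, and the weights are fixed as $\alpha,1-\alpha$ with $0<\alpha<1$.

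The core step handles $0<r\leq s$. Since $s/r\geq 1$, the function $\varphi(t)=t^{s/r}$ is convex on $[0,\infty)$, so Jensen's inequality applied to the two mass points $a^r,b^r$ carrying weights $\alpha,1-\alpha$ gives
\[
\big(\alpha a^r+(1-\alpha)b^r\big)^{s/r}\ \leq\ \alpha a^s+(1-\alpha)b^s.
\]
Raising both sides to the positive power $1/s$ yields precisely $M_r(a,b,\alpha)\leq M_s(a,b,\alpha)$.

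To reach the exponent $0$ I would use the weighted AM--GM inequality: applying $x^\alpha y^{1-\alpha}\leq \alpha x+(1-\alpha)y$ to $x=a^s,\ y=b^s$ and taking $1/s$-th powers gives $M_0(a,b,\alpha)=a^\alpha b^{1-\alpha}\leq M_s(a,b,\alpha)$ for every $s>0$. The negative exponents are then obtained for free from the reciprocal identity
\[
M_{-t}(a,b,\alpha)=\frac{1}{M_t\!\left(\tfrac1a,\tfrac1b,\alpha\right)}\qquad(t\neq 0),
\]
which is a direct computation. Applying the positive-exponent conclusions to $\tfrac1a,\tfrac1b$ and inverting reverses the direction of the inequality in exactly the way needed: this delivers $M_r\leq M_0$ for $r<0$ and $M_r\leq M_s$ whenever $r\leq s<0$. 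Finally the remaining mixed case $r<0<s$ follows by transitivity through the bridge $M_r\leq M_0\leq M_s$.

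The conceptual content sits entirely in the convexity/Jensen step of the first range; everything afterwards is bookkeeping. Accordingly I expect the only real care to be needed in the gluing: verifying that the reciprocal identity flips the inequalities correctly (so that a monotone increase for positive exponents becomes a monotone increase, not a decrease, for negative ones) and confirming continuity of $r\mapsto M_r$ at $r=0$, for instance via the L'H\^opital evaluation $\lim_{r\to0}\tfrac1r\log\big(\alpha a^r+(1-\alpha)b^r\big)=\alpha\log a+(1-\alpha)\log b$, so that the zero exponent genuinely serves as the bridge between the two signed regimes.
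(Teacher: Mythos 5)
Your proof is correct. Note, however, that the paper supplies no argument for this lemma at all: it is quoted verbatim from Hardy--Littlewood--P\'olya (\emph{Inequalities}, p.~26) as a known fact, so there is nothing in the text to compare against step by step. Your write-up is the standard proof of the two-point power-mean inequality, and each stage checks out: the convexity of $t\mapsto t^{s/r}$ for $0<r\le s$ gives the core comparison, weighted AM--GM supplies $M_0\le M_s$ for $s>0$, and the identity $M_{-t}(a,b,\alpha)=1/M_t(1/a,1/b,\alpha)$ correctly transports the positive-exponent results to negative exponents with the inequality reversing twice (once from the order reversal $-s\le -r$ and once from taking reciprocals), so monotonicity is preserved; the mixed case $r<0<s$ then follows by transitivity through $M_0$. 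The only cosmetic remark is that the continuity of $r\mapsto M_r$ at $r=0$ is not actually needed for the chain of inequalities you establish (you prove the comparisons with $M_0$ directly), and the degenerate cases $a=0$ or $b=0$ deserve the one-line convention $M_r=0$ for $r\le 0$ that you allude to; neither point affects the validity of the argument. For the purposes of the paper the lemma is only ever invoked with $r=1\le s$, where your Jensen step alone already suffices.
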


In the following theorem we present lower bounds for the Davis-Wielandt-Berezin radius of bounded linear operators.

\begin{theorem}\label{theo1}
	Let  $A \in \mathcal{L}(\mathscr{H}).$ Then 
	\begin{eqnarray*}
		(i)&& \eta^2(A) \geq  \max\{c^2(A)+\|A^*A\|^2_{ber}, \textbf{ber}^2(A)+c^2(A^*A) \},\\
		(ii)&& \eta^2(A) \geq 2 \max\{ \textbf{ber}(A)c(A^*A), c(A)\|A^*A\|_{ber} \},\\
		(iii)&& \eta^2(A) \geq \max\{c^2(A)(1+\|A^*A\|_{ber}),
		\textbf{ber}^2(A)(1+c(A^*A))  \}. 
	\end{eqnarray*}
\end{theorem}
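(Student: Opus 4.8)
The plan is to reduce everything to pointwise estimates on the quantity $|\widetilde{A}(\lambda)|^2+\|A\hat{k}_{\lambda}\|^4$ and then take the supremum over $\lambda\in\Omega$, since $\eta^2(A)$ is exactly this supremum. The starting point is the identity $\|A\hat{k}_{\lambda}\|^2=\langle A^*A\hat{k}_{\lambda},\hat{k}_{\lambda}\rangle=\widetilde{A^*A}(\lambda)$. Because $A^*A$ is positive, $\widetilde{A^*A}(\lambda)\geq 0$ for every $\lambda$, and using $\|A^*A\|_{ber}=\textbf{ber}(A^*A)$ (valid for positive operators, as recalled in the Introduction) we obtain $\sup_{\lambda}\|A\hat{k}_{\lambda}\|^2=\|A^*A\|_{ber}$ and $\inf_{\lambda}\|A\hat{k}_{\lambda}\|^2=c(A^*A)$. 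I will also record the two elementary pointwise bounds that drive the whole argument: first, $c(A)\leq|\widetilde{A}(\lambda)|\leq\textbf{ber}(A)$ directly from the definitions of $c(A)$ and $\textbf{ber}(A)$; second, by Cauchy--Schwarz, $|\widetilde{A}(\lambda)|=|\langle A\hat{k}_{\lambda},\hat{k}_{\lambda}\rangle|\leq\|A\hat{k}_{\lambda}\|$, which yields both $\|A\hat{k}_{\lambda}\|^2\geq|\widetilde{A}(\lambda)|^2$ and, through $\|A\hat{k}_{\lambda}\|\geq|\widetilde{A}(\lambda)|\geq c(A)$, the inequality $\|A\hat{k}_{\lambda}\|^2\geq c^2(A)$ for every $\lambda$.

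For part $(i)$ I would argue termwise. Replacing $|\widetilde{A}(\lambda)|^2$ by its lower bound $c^2(A)$ gives $|\widetilde{A}(\lambda)|^2+\|A\hat{k}_{\lambda}\|^4\geq c^2(A)+\|A\hat{k}_{\lambda}\|^4$, and taking the supremum (the constant $c^2(A)$ pulls out of the sup, leaving $\sup_\lambda\|A\hat{k}_{\lambda}\|^4=\|A^*A\|^2_{ber}$) produces $\eta^2(A)\geq c^2(A)+\|A^*A\|^2_{ber}$; symmetrically, replacing $\|A\hat{k}_{\lambda}\|^4$ by $c^2(A^*A)$ and taking the supremum gives $\eta^2(A)\geq\textbf{ber}^2(A)+c^2(A^*A)$, and the maximum of the two is $(i)$. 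For part $(ii)$ I first apply the scalar inequality $s^2+t^2\geq 2st$ inside the supremum to get $|\widetilde{A}(\lambda)|^2+\|A\hat{k}_{\lambda}\|^4\geq 2|\widetilde{A}(\lambda)|\,\|A\hat{k}_{\lambda}\|^2$; then bounding $\|A\hat{k}_{\lambda}\|^2\geq c(A^*A)$ and taking the sup yields $\eta^2(A)\geq 2\,\textbf{ber}(A)\,c(A^*A)$, while bounding $|\widetilde{A}(\lambda)|\geq c(A)$ and taking the sup yields $\eta^2(A)\geq 2\,c(A)\,\|A^*A\|_{ber}$.

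Part $(iii)$ needs one extra multiplicative trick. For the first estimate, I use $\|A\hat{k}_{\lambda}\|^4=\|A\hat{k}_{\lambda}\|^2\cdot\|A\hat{k}_{\lambda}\|^2\geq c^2(A)\,\|A\hat{k}_{\lambda}\|^2$, which is legitimate precisely because of the pointwise bound $\|A\hat{k}_{\lambda}\|^2\geq c^2(A)$ established above; combining this with $|\widetilde{A}(\lambda)|^2\geq c^2(A)$ gives $|\widetilde{A}(\lambda)|^2+\|A\hat{k}_{\lambda}\|^4\geq c^2(A)\bigl(1+\|A\hat{k}_{\lambda}\|^2\bigr)$, whose supremum is $c^2(A)\bigl(1+\|A^*A\|_{ber}\bigr)$. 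For the second estimate I chain the two pointwise bounds the other way: $\|A\hat{k}_{\lambda}\|^4\geq c(A^*A)\,\|A\hat{k}_{\lambda}\|^2\geq c(A^*A)\,|\widetilde{A}(\lambda)|^2$, using first $\|A\hat{k}_{\lambda}\|^2\geq c(A^*A)$ and then $\|A\hat{k}_{\lambda}\|^2\geq|\widetilde{A}(\lambda)|^2$; hence $|\widetilde{A}(\lambda)|^2+\|A\hat{k}_{\lambda}\|^4\geq|\widetilde{A}(\lambda)|^2\bigl(1+c(A^*A)\bigr)$, and taking the supremum gives $\textbf{ber}^2(A)\bigl(1+c(A^*A)\bigr)$. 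Taking the maximum of the two completes $(iii)$.

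The only genuine subtlety is that $\eta^2(A)$ is the supremum of a \emph{sum} of two $\lambda$-dependent quantities, so the naive move of maximizing the two coordinates separately is invalid; every bound must be read off from a single common $\lambda$ (equivalently, along a single maximizing sequence $\lambda_n$ on which the relevant coordinate tends to its extremal value). Keeping both coordinates tied to the same $\lambda$ is exactly what the uniform pointwise inequalities above accomplish, and the slightly non-obvious ingredient making part $(iii)$ work is the observation $\|A\hat{k}_{\lambda}\|^2\geq c^2(A)$, namely that the least Berezin number of $A$ is controlled by $\|A\hat{k}_{\lambda}\|$ via Cauchy--Schwarz. Everything else is a routine supremum computation.
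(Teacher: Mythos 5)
Your proposal is correct and follows essentially the same route as the paper: pointwise estimates on $|\widetilde{A}(\lambda)|^2+\|A\hat{k}_{\lambda}\|^4$ using $c(\cdot)$, the AM--GM inequality $s^2+t^2\geq 2st$ for (ii), and the Cauchy--Schwarz bound $|\widetilde{A}(\lambda)|\leq\|A\hat{k}_{\lambda}\|$ for (iii), followed by taking the supremum over $\lambda$. The only cosmetic difference is in (iii), where the paper factors $|\widetilde{A}(\lambda)|^2$ out of the whole sum before passing to $c^2(A)$, whereas you bound $\|A\hat{k}_{\lambda}\|^2\geq c^2(A)$ directly; the two are trivially equivalent.
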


\begin{proof}
	(i)	Let $\hat{k}_{\lambda}$ be a normalized reproducing kernel of $\mathscr{H}.$ Then from the definition of $\eta^2(A),$ we get
	   \begin{eqnarray*}
		   \eta^2(A) &\geq& |\langle A\hat{k}_{\lambda},\hat{k}_{\lambda} \rangle|^{2}+\|A\hat{k}_{\lambda}\|^4 \\
		   &=&|\langle A\hat{k}_{\lambda},\hat{k}_{\lambda} \rangle|^{2}+\langle A^*A\hat{k}_{\lambda},\hat{k}_{\lambda} \rangle^{2}\\
		   &\geq& c^2(A)+ \langle A^*A\hat{k}_{\lambda},\hat{k}_{\lambda} \rangle^{2}.
	   \end{eqnarray*}
    Taking supremum over all $\lambda\in\Omega$, we get
    \begin{eqnarray}\label{eqn2}
         \eta^2(A) \geq c^2(A)+\|A^*A\|^2_{ber}.
    \end{eqnarray}
    Again we have 
     \begin{eqnarray*}
    \eta^2(A) \geq |\langle A\hat{k}_{\lambda},\hat{k}_{\lambda} \rangle|^{2}+c^2(A^*A).
    \end{eqnarray*}
    Therefore, taking supremum over all $\lambda\in\Omega$, we get
    \begin{eqnarray}\label{eqn3}
    \eta^2(A) \geq  \textbf{ber}^2(A)+c^2(A^*A).
    \end{eqnarray}
    Combining (\ref{eqn2}) and  (\ref{eqn3}) we get the inequality (i).\\
    (ii) Let $\hat{k}_{\lambda}$ be a normalized reproducing kernel of $\mathscr{H}.$ Then we have, 
     \begin{eqnarray*}
    	 \eta^2(A) & \geq & |\langle A\hat{k}_{\lambda},\hat{k}_{\lambda} \rangle|^{2}+\langle A^*A\hat{k}_{\lambda},\hat{k}_{\lambda} \rangle^{2}\\ 
    	 & \geq & 2 |\langle A\hat{k}_{\lambda},\hat{k}_{\lambda} \rangle|\langle A^*A\hat{k}_{\lambda},\hat{k}_{\lambda} \rangle \\
    	  & \geq & 2  |\langle A\hat{k}_{\lambda},\hat{k}_{\lambda} \rangle|c(A^*A).
    \end{eqnarray*}
    Now,  taking supremum over all $\lambda\in\Omega$, we get
     \begin{eqnarray}\label{eqn4}
    \eta^2(A) \geq 2\textbf{ber}(A)c(A^*A) .
    \end{eqnarray}
    Also, we have
     \begin{eqnarray*}
    	\eta^2(A)  \geq 2c(A)\langle A^*A\hat{k}_{\lambda},\hat{k}_{\lambda} \rangle.
    \end{eqnarray*}
     Taking supremum over all $\lambda\in\Omega$, we get
      \begin{eqnarray}\label{eqn5}
     \eta^2(A) \geq 2  c(A)\|A^*A\|_{ber}.
     \end{eqnarray}
      Therefore, combining (\ref{eqn4}) and  (\ref{eqn5}) we get the inequality (ii).\\
      (iii)  Let $\hat{k}_{\lambda}$ be a normalized reproducing kernel of $\mathscr{H}.$  Then using Cauchy-Schwarz inequality we get, 
       \begin{eqnarray*}
      	\eta^2(A)  &\geq& |\langle A\hat{k}_{\lambda},\hat{k}_{\lambda} \rangle|^{2}+\|A\hat{k}_{\lambda}\|^4 \\
      	& \geq & |\langle A\hat{k}_{\lambda},\hat{k}_{\lambda} \rangle|^{2}+|\langle A\hat{k}_{\lambda},\hat{k}_{\lambda} \rangle|^{2}\|A\hat{k}_{\lambda}\|^2 \\
      	& = & |\langle A\hat{k}_{\lambda},\hat{k}_{\lambda} \rangle|^{2}(1+\langle A^*A\hat{k}_{\lambda},\hat{k}_{\lambda} \rangle) \\
      	& \geq & c^2(A)(1+\langle A^*A\hat{k}_{\lambda},\hat{k}_{\lambda} \rangle).
      \end{eqnarray*}
     Taking supremum over all $\lambda\in\Omega$, we get
      \begin{eqnarray}\label{eqn6}
          \eta^2(A) \geq c^2(A)(1+\|A^*A\|_{ber}).
       \end{eqnarray}
       Again, we have
         \begin{eqnarray*}
        	\eta^2(A)  \geq  |\langle A\hat{k}_{\lambda},\hat{k}_{\lambda} \rangle|^{2} (1+c(A^*A)).
        \end{eqnarray*}
     Now,  taking supremum over all $\lambda\in\Omega$, we get
    \begin{eqnarray}\label{eqn7}
    \eta^2(A) \geq \textbf{ber}^2(A)(1+c(A^*A)).
    \end{eqnarray}
     So, combining (\ref{eqn6}) and  (\ref{eqn7}) we get the inequality (iii).
\end{proof}

\begin{remark}
   Clearly,	the lower bound of $\eta(A)$ obtained in Theorem \ref{theo1} (i) improves the lower bound given in (\ref{eqn1}).
\end{remark}

Now, in the following Theorem we obtain an upper bound for the Davis-Wielandt-Berezin radius of bounded linear operators.

\begin{theorem}\label{theo2}
	Let  $A \in \mathcal{L}(\mathscr{H}).$ Then 
	\begin{eqnarray*}
	    \eta^2(A) \leq \min_{\theta \in [0,2\pi]}\textbf{ber}^2(|A|^2+e^{\rm i \theta}A)+2\|A^*A\|_{ber}\textbf{ber}(A).
	\end{eqnarray*}
\end{theorem}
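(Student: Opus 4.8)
The plan is to fix the angle $\theta$, work at the level of a single normalized reproducing kernel $\hat{k}_\lambda$, and extract the estimate pointwise before passing to suprema. Writing $a_\lambda = |\widetilde{A}(\lambda)|$ and $b_\lambda = \|A\hat{k}_\lambda\|^2 = \langle A^*A\hat{k}_\lambda,\hat{k}_\lambda\rangle = \widetilde{A^*A}(\lambda)$, observe that $b_\lambda \geq 0$ and, directly from the definition of the Davis--Wielandt--Berezin radius,
\[
\eta^2(A) = \sup_{\lambda\in\Omega}\left(|\widetilde{A}(\lambda)|^2 + \|A\hat{k}_\lambda\|^4\right) = \sup_{\lambda\in\Omega}\left(a_\lambda^2 + b_\lambda^2\right).
\]
Thus the whole problem reduces to bounding $a_\lambda^2 + b_\lambda^2$ in terms of the Berezin symbol of $|A|^2 + e^{i\theta}A$.

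The key step is a pointwise identity coming from the linearity of the Berezin symbol together with $|A|^2 = A^*A$. For every $\theta$ and every $\lambda$,
\[
\widetilde{\,|A|^2 + e^{i\theta}A\,}(\lambda) = b_\lambda + e^{i\theta}\widetilde{A}(\lambda),
\]
and since $b_\lambda$ is real, expanding $|z+w|^2 = |z|^2 + |w|^2 + 2\Re(z\overline{w})$ yields
\[
\left|\widetilde{\,|A|^2 + e^{i\theta}A\,}(\lambda)\right|^2 = a_\lambda^2 + b_\lambda^2 + 2b_\lambda\,\Re\!\left(e^{i\theta}\widetilde{A}(\lambda)\right).
\]
Solving for $a_\lambda^2 + b_\lambda^2$ and using $\left|\Re\!\left(e^{i\theta}\widetilde{A}(\lambda)\right)\right| \leq |\widetilde{A}(\lambda)| = a_\lambda$ together with $b_\lambda\geq 0$, I would bound the cross term by $2a_\lambda b_\lambda$, so that for each $\lambda$,
\[
a_\lambda^2 + b_\lambda^2 \leq \left|\widetilde{\,|A|^2 + e^{i\theta}A\,}(\lambda)\right|^2 + 2a_\lambda b_\lambda \leq \textbf{ber}^2(|A|^2 + e^{i\theta}A) + 2a_\lambda b_\lambda,
\]
where the last inequality is just the definition of the Berezin number.

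Finally I would take the supremum over $\lambda$. Since the first term on the right is already independent of $\lambda$, I can split the supremum of the sum and estimate $\sup_\lambda 2a_\lambda b_\lambda \leq 2(\sup_\lambda a_\lambda)(\sup_\lambda b_\lambda) = 2\,\textbf{ber}(A)\,\textbf{ber}(A^*A) = 2\,\textbf{ber}(A)\,\|A^*A\|_{ber}$, the last equality using that $A^*A$ is positive, so its Berezin number equals its Berezin norm as recalled in the introduction. This gives $\eta^2(A) \leq \textbf{ber}^2(|A|^2 + e^{i\theta}A) + 2\|A^*A\|_{ber}\textbf{ber}(A)$ for every $\theta\in[0,2\pi]$, and minimizing over $\theta$ delivers the claim. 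The only delicate point is that the phase of $\widetilde{A}(\lambda)$ depends on $\lambda$, so one cannot choose a single $\theta$ that makes the cross term negative for all $\lambda$ simultaneously; this is precisely what forces the crude bound $2a_\lambda b_\lambda$ and the separation of the two suprema, and it is what produces the additive term $2\|A^*A\|_{ber}\textbf{ber}(A)$.
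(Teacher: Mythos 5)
Your proof is correct and follows essentially the same route as the paper: both rest on the pointwise identity $|\widetilde{A}(\lambda)|^2+\|A\hat{k}_{\lambda}\|^4=\big|\langle(|A|^2+e^{\rm i\theta}A)\hat{k}_{\lambda},\hat{k}_{\lambda}\rangle\big|^2-2\|A\hat{k}_{\lambda}\|^2\,\Re\big(e^{\rm i\theta}\widetilde{A}(\lambda)\big)$, bound the cross term by $2\|A\hat{k}_{\lambda}\|^2\,|\widetilde{A}(\lambda)|$, and then separate the suprema using $\textbf{ber}(A^*A)=\|A^*A\|_{ber}$ for the positive operator $A^*A$. The only cosmetic difference is that the paper first proves the case $\theta=0$ and then substitutes $e^{\rm i\theta}A$ for $A$, whereas you carry $\theta$ through from the start.
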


\begin{proof}
	 Let $\hat{k}_{\lambda}$ be a normalized reproducing kernel of $\mathscr{H}.$ Then we get,
	  \begin{eqnarray*}
	 	 &&|\langle A\hat{k}_{\lambda},\hat{k}_{\lambda}\rangle|^{2} +\|A\hat{k}_{\lambda}\|^4\\
	 	 &=& |\langle A\hat{k}_{\lambda},\hat{k}_{\lambda}\rangle+\langle A\hat{k}_{\lambda},A \hat{k}_{\lambda}\rangle|^2-2\Re\left( \langle A\hat{k}_{\lambda},A \hat{k}_{\lambda}\rangle \langle A\hat{k}_{\lambda}, \hat{k}_{\lambda}\rangle\right)\\
	 	  & \leq & |\langle A\hat{k}_{\lambda},\hat{k}_{\lambda}\rangle+\langle |A|^2\hat{k}_{\lambda},\hat{k}_{\lambda}\rangle|^2+ 2 \langle A\hat{k}_{\lambda}, A\hat{k}_{\lambda}\rangle | \langle A\hat{k}_{\lambda}, \hat{k}_{\lambda}\rangle|\\
	 	 & = & |\langle (|A|^2+A)\hat{k}_{\lambda},\hat{k}_{\lambda}\rangle|^{2} + 2 \langle A^*A\hat{k}_{\lambda}, \hat{k}_{\lambda}\rangle | \langle A\hat{k}_{\lambda}, \hat{k}_{\lambda}\rangle|\\
	 	 & \leq & \textbf{ber}^2(|A|^2+A)+2\|A^*A\|_{ber}\textbf{ber}(A) \, \, \mbox{ (since $A^*A$ is positive).}
	 \end{eqnarray*}
  Taking supremum over all $\lambda\in\Omega$, we get
   \begin{eqnarray}\label{eqn8}
  \eta^2(A) \leq \textbf{ber}^2(|A|^2+A)+2\|A^*A\|_{ber}\textbf{ber}(A).
  \end{eqnarray}
  Now, replacing $A$ by $e^{\rm i \theta}A$ in (\ref{eqn8}) we get,
   \begin{eqnarray}\label{eqn9}
  \eta^2(A) \leq \textbf{ber}^2(|A|^2+e^{\rm i \theta}A)+2\|A^*A\|_{ber}\textbf{ber}(A).
  \end{eqnarray}
  Therefore, taking minimum over all $\theta \in [0,2\pi]$ in (\ref{eqn9}) we get the desired result.
\end{proof}

\begin{remark}
    Clearly, $\sup_{\lambda\in\Omega}\|A\hat{k}_{\lambda}\|^2=\|A^*A\|_{ber}.$ Therefore, the inequality obtained in \cite[Th. 1.]{dwb} is of the form
    \begin{eqnarray}\label{rmk1}
    \eta^2(A) \leq \textbf{ber}^2(|A|^2-A)+2\|A^*A\|_{ber}\textbf{ber}(A).
    \end{eqnarray}
    So, we remark that the upper bound of $ \eta(A)$ obtained in Theorem \ref{theo2} is sharper than the existing upper bound in (\ref{rmk1}). Considering $A=- I$ it easily follows that 
    \begin{eqnarray*}
         \textbf{ber}^2(|A|^2+e^{\rm i \theta}A)+2\|A^*A\|_{ber}\textbf{ber}(A)=2 \, \,\, (\text{for $\theta=0$})
     \end{eqnarray*}
      and 
     \begin{eqnarray*}
       \textbf{ber}^2(|A|^2-A)+2\|A^*A\|_{ber}\textbf{ber}(A)=6,
    \end{eqnarray*}
   so that 
    \begin{eqnarray*}
     \min_{\theta \in [0,2\pi]}\textbf{ber}^2(|A|^2+e^{\rm i \theta}A)+2\|A^*A\|_{ber}\textbf{ber}(A) < \textbf{ber}^2(|A|^2-A)+2\|A^*A\|_{ber}\textbf{ber}(A). 
    \end{eqnarray*}
\end{remark}

In the next Theorem we obtain an upper bound as well as a lower bound for the Davis-Wielandt-Berezin radius of bounded linear operators.

\begin{theorem}\label{theo3}
	Let  $A \in \mathcal{L}(\mathscr{H}).$ Then 
	\begin{eqnarray*}
	&&\frac{1}{2}\max\left\{\textbf{ber}^2(A+|A|^2)+c^2(A-|A|^2), \textbf{ber}^2(A-|A|^2)+ c^2(A+|A|^2) \right\}\\
	&&  \leq 	\eta^2(A)
	  \leq \frac{1}{2}\left\{\textbf{ber}^2(A+|A|^2)+\textbf{ber}^2(A-|A|^2)\right\}.
	\end{eqnarray*}
\end{theorem}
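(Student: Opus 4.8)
The plan is to reduce the whole statement to one elementary identity applied pointwise to the normalized reproducing kernels. The starting observation is that since $|A|^2 = A^*A$ is positive, the quantity $\|A\hat{k}_{\lambda}\|^2 = \langle A^*A\hat{k}_{\lambda},\hat{k}_{\lambda}\rangle = \widetilde{|A|^2}(\lambda)$ is a nonnegative \emph{real} number for every $\lambda \in \Omega$. Hence, writing $z = \widetilde{A}(\lambda) \in \mathbb{C}$ and $t = \widetilde{|A|^2}(\lambda) \in \mathbb{R}$, the expression inside the supremum defining $\eta^2(A)$ is precisely $|z|^2 + t^2$.

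The key step is the parallelogram-type identity, valid for any $z \in \mathbb{C}$ and any real $t$:
\begin{eqnarray*}
|z|^2 + t^2 = \frac{1}{2}\left(|z+t|^2 + |z-t|^2\right).
\end{eqnarray*}
By linearity of the Berezin symbol we have $z + t = \widetilde{(A+|A|^2)}(\lambda)$ and $z - t = \widetilde{(A-|A|^2)}(\lambda)$, so that for every $\lambda \in \Omega$,
\begin{eqnarray*}
|\widetilde{A}(\lambda)|^2 + \|A\hat{k}_{\lambda}\|^4 = \frac{1}{2}\left(\big|\widetilde{(A+|A|^2)}(\lambda)\big|^2 + \big|\widetilde{(A-|A|^2)}(\lambda)\big|^2\right).
\end{eqnarray*}
This single identity carries both bounds.

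For the upper bound I would estimate each term on the right by its supremum, namely $\big|\widetilde{(A\pm|A|^2)}(\lambda)\big|^2 \leq \textbf{ber}^2(A\pm|A|^2)$, and then take the supremum over $\lambda$ to obtain $\eta^2(A) \leq \frac{1}{2}\left(\textbf{ber}^2(A+|A|^2) + \textbf{ber}^2(A-|A|^2)\right)$. For the lower bound I would retain one of the two terms while bounding the other from below by the least Berezin number: keeping $\big|\widetilde{(A+|A|^2)}(\lambda)\big|^2$ and using $\big|\widetilde{(A-|A|^2)}(\lambda)\big|^2 \geq c^2(A-|A|^2)$, then taking the supremum over $\lambda$, yields $\eta^2(A) \geq \frac{1}{2}\left(\textbf{ber}^2(A+|A|^2) + c^2(A-|A|^2)\right)$. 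Interchanging the roles of the two terms gives the symmetric estimate, and the maximum of the two produces the stated lower bound.

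I do not anticipate any serious obstacle: the entire argument hinges on recognizing that $\widetilde{|A|^2}(\lambda)$ is real, which is exactly what makes the parallelogram identity applicable with $t$ real and ties the two halves $A\pm|A|^2$ together. The only point needing mild care is, in the lower bound, keeping the $\textbf{ber}$-bound (a supremum) paired with the term it is applied to while the $c$-bound (an infimum) is treated as a constant, so that the supremum over $\lambda$ distributes correctly and yields the two symmetric expressions whose maximum is claimed.
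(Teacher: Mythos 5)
Your proof is correct and follows essentially the same route as the paper: the paper's proof also rests on the pointwise parallelogram identity $|\widetilde{A}(\lambda)|^{2}+\|A\hat{k}_{\lambda}\|^{4}=\tfrac{1}{2}\big|\widetilde{(A+|A|^{2})}(\lambda)\big|^{2}+\tfrac{1}{2}\big|\widetilde{(A-|A|^{2})}(\lambda)\big|^{2}$, followed by bounding both terms above by $\textbf{ber}^{2}$ for the upper bound and, for the lower bound, keeping one term while replacing the other by the corresponding $c^{2}$ before taking the supremum. No gaps.
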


\begin{proof}
	Let $\hat{k}_{\lambda}$ be a normalized reproducing kernel of $\mathscr{H}.$ Then we have,

     \begin{eqnarray}\label{eqn11}
         	&&|\langle      A\hat{k}_{\lambda},\hat{k}_{\lambda}\rangle|^{2}+\|A\hat{k}_{\lambda}\|^4 \nonumber\\
         	&&=\frac{1}{2}|\langle A\hat{k}_{\lambda},\hat{k}_{\lambda}\rangle + \langle A\hat{k}_{\lambda},A\hat{k}_{\lambda}\rangle|^2+\frac{1}{2}|\langle A\hat{k}_{\lambda},\hat{k}_{\lambda}\rangle - \langle A\hat{k}_{\lambda},A\hat{k}_{\lambda}\rangle|^2\nonumber\\
         	&&=\frac{1}{2}|\langle A\hat{k}_{\lambda},\hat{k}_{\lambda}\rangle + \langle |A|^2\hat{k}_{\lambda},\hat{k}_{\lambda}\rangle|^2+\frac{1}{2}|\langle A\hat{k}_{\lambda},\hat{k}_{\lambda}\rangle - \langle |A|^2\hat{k}_{\lambda},\hat{k}_{\lambda}\rangle|^2\nonumber\\
         	&&=\frac{1}{2}|\langle (A+|A|^2)\hat{k}_{\lambda},\hat{k}_{\lambda}\rangle|^2+\frac{1}{2}|\langle (A-|A|^2)\hat{k}_{\lambda},\hat{k}_{\lambda}\rangle|^2.
     \end{eqnarray}
     Now, from (\ref{eqn11}), we get
     \begin{eqnarray*}
     	|\langle A\hat{k}_{\lambda},\hat{k}_{\lambda}\rangle|^{2}+\|A\hat{k}_{\lambda}\|^4 \leq  \frac{1}{2}\left\{\textbf{ber}^2(A+|A|^2)+\textbf{ber}^2(A-|A|^2)\right\}.
     \end{eqnarray*}
     Therefore, taking supremum over all $\lambda\in\Omega,$ we get
      \begin{eqnarray}\label{eqn13}
      \eta^2(A) \leq \frac{1}{2}\left\{\textbf{ber}^2(A+|A|^2)+\textbf{ber}^2(A-|A|^2)\right\}.
     \end{eqnarray}
    Again from the definition of $\eta(A)$ and using the equality (\ref{eqn11}), we get 
        \begin{eqnarray*}
       	  \frac{1}{2}\left\{|\langle (A+|A|^2)\hat{k}_{\lambda},\hat{k}_{\lambda}\rangle|^2+c^2(A-|A|^2)\right\} \leq |\langle      A\hat{k}_{\lambda},\hat{k}_{\lambda}\rangle|^{2}+\|A\hat{k}_{\lambda}\|^4.
       \end{eqnarray*}
   Taking supremum over all $\lambda\in\Omega,$ we get
   \begin{eqnarray}\label{eqn12a}
    \frac{1}{2}\textbf{ber}^2(A+|A|^2)+\frac{1}{2}c^2(A-|A|^2) \leq \eta^2(A).
   \end{eqnarray}
   Similarly,  from the equality (\ref{eqn11}), we get 
   \begin{eqnarray*}
   	 \frac{1}{2}\left\{c^2(A+|A|^2)+|\langle (A-|A|^2)\hat{k}_{\lambda},\hat{k}_{\lambda}\rangle|^2\right\} \leq |\langle      A\hat{k}_{\lambda},\hat{k}_{\lambda}\rangle|^{2}+\|A\hat{k}_{\lambda}\|^4.
   \end{eqnarray*}
   Taking supremum over all $\lambda\in\Omega,$ we get
   \begin{eqnarray}\label{eqn12b}
    \frac{1}{2}\textbf{ber}^2(A-|A|^2)+\frac{1}{2}c^2(A+|A|^2) \leq \eta^2(A).
   \end{eqnarray}
   Therefore, combining (\ref{eqn12a}) and (\ref{eqn12b}) we get the first inequality, and this completes the proof.
\end{proof}

Another upper bound of the Davis-Wielandt-Berezin radius of bounded linear operators reads as follows. 

\begin{theorem}\label{theo5}
	Let  $A \in \mathcal{L}(\mathscr{H}).$ Then for all $\alpha \in [0,1],$
	\begin{eqnarray*}
		\eta^2(A) \leq&& \frac{1}{4}\left\| |A|^{2\alpha}+|A^*|^{2(1-\alpha)}\right\|^2_{ber}\\
		&&+ \frac{1}{4}\textbf{ber}\left(2|A|^2+|A|^{2\alpha}-|A^*|^{2(1-\alpha)}\right)\textbf{ber}\left(2|A|^2-|A|^{2\alpha}+|A^*|^{2(1-\alpha)}\right).
	\end{eqnarray*}
    
\end{theorem}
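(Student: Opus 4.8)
The plan is to reduce everything to an estimate on a single unit vector. I fix a normalized reproducing kernel $\hat{k}_{\lambda}$ and write $x=\hat{k}_{\lambda}$; the whole theorem follows once I bound $|\langle Ax,x\rangle|^2+\|Ax\|^4$ by the right-hand summand and take the supremum over $\lambda\in\Omega$ at the very end. First I would apply Lemma \ref{lemma2} with $y=x$ to obtain $|\langle Ax,x\rangle|^2\leq \langle |A|^{2\alpha}x,x\rangle\,\langle |A^*|^{2(1-\alpha)}x,x\rangle$, which is exactly where the exponent $\alpha$ and the two operators $|A|^{2\alpha}$ and $|A^*|^{2(1-\alpha)}$ enter. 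Combined with $\|Ax\|^4=\langle |A|^2 x,x\rangle^2$, this replaces the geometric quantities by three nonnegative reals $p=\langle |A|^{2\alpha}x,x\rangle$, $q=\langle |A^*|^{2(1-\alpha)}x,x\rangle$, $r=\langle |A|^2 x,x\rangle$, and leaves me to estimate $pq+r^2$.

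The key algebraic step is the exact identity
\[ pq+r^2=\tfrac14(p+q)^2+\tfrac14(2r+p-q)(2r-p+q), \]
which follows from $(2r+p-q)(2r-p+q)=4r^2-(p-q)^2$ together with $(p+q)^2-(p-q)^2=4pq$. Translating back, $p+q=\langle(|A|^{2\alpha}+|A^*|^{2(1-\alpha)})x,x\rangle$, while $2r\pm(p-q)$ are the Berezin symbols at $\lambda$ of the self-adjoint operators $2|A|^2\pm\big(|A|^{2\alpha}-|A^*|^{2(1-\alpha)}\big)$.

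For the first term I would use that $|A|^{2\alpha}+|A^*|^{2(1-\alpha)}$ is positive, so $0\le p+q\le \textbf{ber}(|A|^{2\alpha}+|A^*|^{2(1-\alpha)})=\big\||A|^{2\alpha}+|A^*|^{2(1-\alpha)}\big\|_{ber}$, where the last equality is the cited fact that the Berezin number and the Berezin norm agree on positive operators; squaring gives the first summand. For the second term, since the two factors are real I bound the product by the product of absolute values, $(2r+p-q)(2r-p+q)\le |2r+p-q|\,|2r-p+q|$, and each factor is dominated for every $\lambda$ by the corresponding Berezin number $\textbf{ber}(2|A|^2+|A|^{2\alpha}-|A^*|^{2(1-\alpha)})$ and $\textbf{ber}(2|A|^2-|A|^{2\alpha}+|A^*|^{2(1-\alpha)})$. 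Assembling the two estimates and taking the supremum over $\lambda$ yields the claim.

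I expect the only genuine subtlety to be the handling of the second term: the product $(2r+p-q)(2r-p+q)=4r^2-(p-q)^2$ is not sign-definite, so I cannot directly take a supremum of a positive quantity and must pass through the product-of-absolute-values bound followed by the inequality $\sup_\lambda fg\le(\sup_\lambda f)(\sup_\lambda g)$ for the nonnegative functions $f=|2r+p-q|$ and $g=|2r-p+q|$. Everything else is either a direct application of Lemma \ref{lemma2}, the positivity remark, or routine passage to the supremum.
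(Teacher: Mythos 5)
Your proposal is correct and follows essentially the same route as the paper: Lemma \ref{lemma2} with $y=x$, the identity $pq+r^2=\tfrac14(p+q)^2+\tfrac14(2r+p-q)(2r-p+q)$ (which is exactly the paper's polarization-plus-difference-of-squares manipulation), and then termwise passage to the Berezin norm and Berezin numbers before taking the supremum. Your explicit remark on bounding the non-sign-definite product via absolute values is a careful articulation of a step the paper performs implicitly.
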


\begin{proof}
	 Let $\hat{k}_{\lambda}$ be a normalized reproducing kernel of $\mathscr{H}.$ Then we have
	\begin{eqnarray*}
	 	&& |\langle      A\hat{k}_{\lambda},\hat{k}_{\lambda}\rangle|^{2}+\|A\hat{k}_{\lambda}\|^4 \\ 
	 	&& \leq \langle |A|^{2\alpha}\hat{k}_{\lambda},\hat{k}_{\lambda} \rangle \langle |A^*|^{2(1-\alpha)}\hat{k}_{\lambda},\hat{k}_{\lambda}\rangle+\langle      |A|^2\hat{k}_{\lambda},\hat{k}_{\lambda}\rangle^2\,\,\,\,\Big(\mbox{by Lemma \ref{lemma2}}\Big)\\
	 	&&= \frac{1}{4}\left(\langle (|A|^{2\alpha} + |A^*|^{2(1-\alpha)})\hat{k}_{\lambda},\hat{k}_{\lambda}\rangle^2-\langle (|A|^{2\alpha} - |A^*|^{2(1-\alpha)})\hat{k}_{\lambda},\hat{k}_{\lambda}\rangle^2\right)+\langle      |A|^2\hat{k}_{\lambda},\hat{k}_{\lambda}\rangle^2\\
	 	&& = \frac{1}{4} \langle (|A|^{2\alpha} + |A^*|^{2(1-\alpha)})\hat{k}_{\lambda},\hat{k}_{\lambda}\rangle^2+ \frac{1}{4}  \left(\langle 2|A|^2\hat{k}_{\lambda},\hat{k}_{\lambda}\rangle^2-\langle (|A|^{2\alpha} - |A^*|^{2(1-\alpha)})\hat{k}_{\lambda},\hat{k}_{\lambda}\rangle^2\right)\\
	 	&& = \frac{1}{4} \langle (|A|^{2\alpha} + |A^*|^{2(1-\alpha)})\hat{k}_{\lambda},\hat{k}_{\lambda}\rangle^2\\
	 	&&\,\,\,\,\,\,+\frac{1}{4}\langle  (2|A|^2+|A|^{2\alpha}-|A^*|^{2(1-\alpha)})\hat{k}_{\lambda},\hat{k}_{\lambda}\rangle \langle (2|A|^2-|A|^{2\alpha}+|A^*|^{2(1-\alpha)})\hat{k}_{\lambda},\hat{k}_{\lambda}\rangle\\
	 	&& \leq \frac{1}{4}\| |A|^{2\alpha}+|A^*|^{2(1-\alpha)}\|^2_{ber}\\
	 	&&\,\,\,\,\,\,+ \frac{1}{4}\textbf{ber}\left(2|A|^2+|A|^{2\alpha}-|A^*|^{2(1-\alpha)}\right)\textbf{ber}\left(2|A|^2-|A|^{2\alpha}+|A^*|^{2(1-\alpha)}\right).
	 \end{eqnarray*}
      Taking supremum over all $\lambda\in\Omega$, we get the desired result.
\end{proof}

In particular, choosing $\alpha =\frac{1}{2}$ in Theorem \ref{theo5} we get the following corollary.

\begin{cor}\label{cor1}
	Let  $A \in \mathcal{L}(\mathscr{H}).$ Then 
	\begin{eqnarray*}
		\eta^2(A) \leq \frac{1}{4}\left\| |A|+|A^*|\right\|^2_{ber}
		+ \frac{1}{4}\textbf{ber}\left(2|A|^2+|A|-|A^*|\right)\textbf{ber}\left(2|A|^2-|A|+|A^*|\right).
	\end{eqnarray*}
\end{cor}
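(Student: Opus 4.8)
The plan is to obtain this corollary as a direct specialization of Theorem \ref{theo5}, which already establishes the inequality
\begin{eqnarray*}
	\eta^2(A) \leq \frac{1}{4}\left\| |A|^{2\alpha}+|A^*|^{2(1-\alpha)}\right\|^2_{ber}+ \frac{1}{4}\textbf{ber}\left(2|A|^2+|A|^{2\alpha}-|A^*|^{2(1-\alpha)}\right)\textbf{ber}\left(2|A|^2-|A|^{2\alpha}+|A^*|^{2(1-\alpha)}\right)
\end{eqnarray*}
for every $\alpha \in [0,1]$. Since $\tfrac{1}{2} \in [0,1]$, the theorem applies verbatim at this value, so no fresh estimate is needed.

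The key step is the computation of the exponents at $\alpha = \tfrac{1}{2}$. Here $2\alpha = 1$ forces $|A|^{2\alpha} = |A|^{1} = |A|$, and $2(1-\alpha) = 1$ forces $|A^*|^{2(1-\alpha)} = |A^*|^{1} = |A^*|$. Substituting these two identities into each of the three operator expressions appearing in Theorem \ref{theo5} immediately converts $|A|^{2\alpha}+|A^*|^{2(1-\alpha)}$ into $|A|+|A^*|$, and the two arguments of the $\textbf{ber}(\cdot)$ factors into $2|A|^2+|A|-|A^*|$ and $2|A|^2-|A|+|A^*|$ respectively.

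I expect no genuine obstacle, since the only content is the arithmetic simplification of the fractional powers; the analytic work has already been carried out in Theorem \ref{theo5} via Lemma \ref{lemma2} together with the positivity of $|A|^2$. The one point worth stating carefully is that the bound in Theorem \ref{theo5} holds \emph{for all} $\alpha \in [0,1]$ simultaneously, so one is free to instantiate it at the single value $\alpha = \tfrac{1}{2}$ without loss; after the substitution the claimed inequality is exactly the resulting expression, completing the proof.
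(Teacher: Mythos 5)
Your proposal is correct and coincides with the paper's own derivation: the corollary is obtained precisely by instantiating Theorem \ref{theo5} at $\alpha=\frac{1}{2}$, where $|A|^{2\alpha}=|A|$ and $|A^*|^{2(1-\alpha)}=|A^*|$. Nothing further is needed.
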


Next upper bound of the Davis-Wielandt-Berezin radius of bounded linear operators reads as follows.

\begin{theorem}\label{theo6}
	Let  $A \in \mathcal{L}(\mathscr{H})$. Then 	for all $r \geq 1$ and  for all $\alpha \in [0,1],$ 
	\begin{eqnarray*}
		\eta^{4r}(A) \leq 2^{2r-2}\left\||A|^{4\alpha r}+|A|^{4r}\right\|_{ber} \left\||A^*|^{4(1-\alpha) r}+|A|^{4r}\right\|_{ber}.
	\end{eqnarray*}

\end{theorem}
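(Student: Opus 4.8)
The plan is to bound the scalar quantity $|\widetilde{A}(\lambda)|^{2}+\|A\hat{k}_{\lambda}\|^{4}$ for a fixed normalized reproducing kernel $\hat{k}_{\lambda}$, raise the resulting estimate to the power $2r$, and only take the supremum over $\lambda\in\Omega$ at the very end; since $\eta^{4r}(A)=\sup_{\lambda}\bigl(|\widetilde{A}(\lambda)|^{2}+\|A\hat{k}_{\lambda}\|^{4}\bigr)^{2r}$, this will deliver the claim. First I would apply Lemma \ref{lemma2} to the inner product term, writing $|\widetilde{A}(\lambda)|^{2}=|\langle A\hat{k}_{\lambda},\hat{k}_{\lambda}\rangle|^{2}\leq \langle|A|^{2\alpha}\hat{k}_{\lambda},\hat{k}_{\lambda}\rangle\,\langle|A^{*}|^{2(1-\alpha)}\hat{k}_{\lambda},\hat{k}_{\lambda}\rangle$, and simultaneously record $\|A\hat{k}_{\lambda}\|^{4}=\langle|A|^{2}\hat{k}_{\lambda},\hat{k}_{\lambda}\rangle^{2}$. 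Setting $a=\langle|A|^{2\alpha}\hat{k}_{\lambda},\hat{k}_{\lambda}\rangle$, $b=\langle|A^{*}|^{2(1-\alpha)}\hat{k}_{\lambda},\hat{k}_{\lambda}\rangle$ and $c=\langle|A|^{2}\hat{k}_{\lambda},\hat{k}_{\lambda}\rangle$, this yields the compact bound $|\widetilde{A}(\lambda)|^{2}+\|A\hat{k}_{\lambda}\|^{4}\leq ab+c^{2}$.

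The decisive step is to factor $ab+c^{2}$. Viewing it as the Euclidean inner product of the vectors $(a,c)$ and $(b,c)$ in $\mathbb{R}^{2}$, the Cauchy--Schwarz inequality gives $ab+c^{2}\leq\sqrt{(a^{2}+c^{2})(b^{2}+c^{2})}$, which is precisely what eventually produces the product of two Berezin norms in the statement. Raising the chain $|\widetilde{A}(\lambda)|^{2}+\|A\hat{k}_{\lambda}\|^{4}\leq ab+c^{2}\leq\sqrt{(a^{2}+c^{2})(b^{2}+c^{2})}$ to the power $2r$ (valid since both sides are nonnegative and $2r>0$) gives $\bigl(|\widetilde{A}(\lambda)|^{2}+\|A\hat{k}_{\lambda}\|^{4}\bigr)^{2r}\leq (a^{2}+c^{2})^{r}(b^{2}+c^{2})^{r}$.

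It then remains to estimate each factor separately. Using the convexity inequality $(x+y)^{r}\leq 2^{r-1}(x^{r}+y^{r})$ for $r\geq 1$, I would bound $(a^{2}+c^{2})^{r}\leq 2^{r-1}(a^{2r}+c^{2r})$ and likewise $(b^{2}+c^{2})^{r}\leq 2^{r-1}(b^{2r}+c^{2r})$. Next, Lemma \ref{lemma1} applied to the positive operators $|A|^{2\alpha}$, $|A|^{2}$ and $|A^{*}|^{2(1-\alpha)}$ with exponent $2r\geq 1$ upgrades the scalar powers to operator powers, namely $a^{2r}=\langle|A|^{2\alpha}\hat{k}_{\lambda},\hat{k}_{\lambda}\rangle^{2r}\leq\langle|A|^{4\alpha r}\hat{k}_{\lambda},\hat{k}_{\lambda}\rangle$, $c^{2r}\leq\langle|A|^{4r}\hat{k}_{\lambda},\hat{k}_{\lambda}\rangle$ and $b^{2r}\leq\langle|A^{*}|^{4(1-\alpha) r}\hat{k}_{\lambda},\hat{k}_{\lambda}\rangle$; this is exactly where the exponents $4\alpha r$, $4r$ and $4(1-\alpha)r$ enter. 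Collecting terms and using that, for a positive operator $B$, one has $\langle B\hat{k}_{\lambda},\hat{k}_{\lambda}\rangle\leq\textbf{ber}(B)=\|B\|_{ber}$, each factor becomes at most $2^{r-1}$ times $\||A|^{4\alpha r}+|A|^{4r}\|_{ber}$ respectively $\||A^{*}|^{4(1-\alpha)r}+|A|^{4r}\|_{ber}$. Multiplying the two factors and taking the supremum over $\lambda\in\Omega$ finishes the proof. I do not anticipate a serious obstacle here: the only genuinely clever move is the Cauchy--Schwarz factoring that decouples the estimate into two independent Berezin norms, and the remaining difficulty is purely the bookkeeping needed to route the convexity bound and Lemma \ref{lemma1} in the right order so that the exponents land correctly.
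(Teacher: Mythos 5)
Your proof is correct and uses the same ingredients as the paper's own argument (Lemma \ref{lemma2}, Lemma \ref{lemma1}, the Cauchy--Schwarz inequality $(ab+cd)^2\leq(a^2+c^2)(b^2+d^2)$, and the power-mean/convexity inequality of Lemma \ref{lemma4}), arriving at the identical constant $2^{2r-2}$. The only difference is the order of the steps --- you apply the Cauchy--Schwarz factorization before the convexity bound, whereas the paper applies the power-mean inequality to $|\widetilde{A}(\lambda)|^{2}$ and $\|A\hat{k}_{\lambda}\|^{4}$ first --- which does not change the substance of the argument.
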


\begin{proof}
	 Let $\hat{k}_{\lambda}$ be a normalized reproducing kernel of $\mathscr{H}.$ Then we have
	 	\begin{eqnarray*}
	 	   	&& |\langle      A\hat{k}_{\lambda},\hat{k}_{\lambda}\rangle|^{2}+\|A\hat{k}_{\lambda}\|^4 \\ 
	 	   && = 2\left(\frac{|\langle      A\hat{k}_{\lambda},\hat{k}_{\lambda}\rangle|^{2}+\|A\hat{k}_{\lambda}\|^4}{2}\right)\\
	 	   && \leq 2\left(\frac{|\langle      A\hat{k}_{\lambda},\hat{k}_{\lambda}\rangle|^{2r}+\|A\hat{k}_{\lambda}\|^{4r}}{2}\right)^{1/r}\,\,\,\,\Big(\mbox{by Lemma \ref{lemma4}}\Big)\\
	 	    && = 2^{1-1/r}\left(|\langle      A\hat{k}_{\lambda},\hat{k}_{\lambda}\rangle|^{2r}+\langle      |A|^2\hat{k}_{\lambda},\hat{k}_{\lambda}\rangle^{2r}\right)^{1/r}\\
	 	   && \leq  2^{1-1/r}\left(\langle |A|^{2\alpha}\hat{k}_{\lambda},\hat{k}_{\lambda} \rangle^r \langle |A^*|^{2(1-\alpha)}\hat{k}_{\lambda},\hat{k}_{\lambda}\rangle^r+\langle      |A|^2\hat{k}_{\lambda},\hat{k}_{\lambda}\rangle^{2r} \right)^{1/r}\,\,\,\,\Big(\mbox{by Lemma \ref{lemma2}}\Big)\\
	 	    && \leq  2^{1-1/r}\left(\langle |A|^{4\alpha r}\hat{k}_{\lambda},\hat{k}_{\lambda} \rangle^{1/2} \langle |A^*|^{4(1-\alpha)r}\hat{k}_{\lambda},\hat{k}_{\lambda}\rangle^{1/2}+\langle    |A|^{4r}\hat{k}_{\lambda},\hat{k}_{\lambda}\rangle^{1/2}\langle      |A|^{4r}\hat{k}_{\lambda},\hat{k}_{\lambda}\rangle^{1/2} \right)^{1/r}\\
	 	    &&\,\,\,\,\,\,\,\,\,\,\,\,\,\,\,\,\,\,\,\,\,\,\,\,\,\,\,\,\,\,\,\,\,\,\,\,\,\,\,\,\,\,\,\,\,\,\,\,\,\,\,\,\,\,\,\,\,\,\,\,\,\,\,\,\,\,\,\,\,\,\,\,\,\,\,\,\,\,\,\,\,\,\,\,\,\,\,\,\,\,\,\,\,\,\,\,\,\,\,\,\,\,\,\,\,\,\,\,\,\,\,\,\,\,\,\,\,\,\,\,\,\,\,\,\,\,\,\,\,\,\,\,\,\,\,\,\,\,\,\,\,\,\,\,\,\,\,\,\,\,\,\,\,\,\,\,\Big(\mbox{by Lemma \ref{lemma1}}\Big)\\
	 	    && \leq 2^{1-1/r}\left(\langle |A|^{4\alpha r}\hat{k}_{\lambda},\hat{k}_{\lambda} \rangle  +\langle    |A|^{4r}\hat{k}_{\lambda},\hat{k}_{\lambda}\rangle\right)^{1/2r}\left( \langle |A^*|^{4(1-\alpha)r}\hat{k}_{\lambda},\hat{k}_{\lambda}\rangle+ \langle      |A|^{4r}\hat{k}_{\lambda},\hat{k}_{\lambda}\rangle \right)^{1/2r}\\
	 	    && \,\,\,\,\,\,\,\, \,\,\,\,\,\,\,\,\Big(\mbox{since $(ab+cd)^2\leq(a^2+c^2)(b^2+d^2)$ for all $a,b,c,d \in \mathbb{R}$}\Big)\\
	 	    && = 2^{1-1/r}\langle (|A|^{4\alpha r}+|A|^{4r}) \hat{k}_{\lambda},\hat{k}_{\lambda} \rangle^{1/2r} \langle (|A^*|^{4(1-\alpha) r}+|A|^{4r}) \hat{k}_{\lambda},\hat{k}_{\lambda} \rangle^{1/2r}\\
	 	    &&  \leq 2^{1-1/r}\left\||A|^{4\alpha r}+|A|^{4r}\right\|^{1/2r}_{ber} \left\||A^*|^{4(1-\alpha) r}+|A|^{4r}\right\|^{1/2r}_{ber}.
	 	   \end{eqnarray*}
 	       So, taking supremum over all $\lambda\in\Omega$, we get the desired result.
\end{proof}

 In particular, considering $r=1$ in Theorem \ref{theo6} we get the following upper bound for the Davis-Wielandt-Berezin radius.
\begin{cor}\label{cor2}
	Let  $A \in \mathcal{L}(\mathscr{H}).$ Then 
	\begin{eqnarray}\label{eqn14}
		\eta^{4}(A) \leq \min_{ \alpha \in [0,1]} \left\||A|^{4\alpha }+|A|^{4}\right\|_{ber} \left\||A^*|^{4(1-\alpha) }+|A|^{4}\right\|_{ber},
	\end{eqnarray}

   In particular, for $\alpha=\frac{1}{2}$
   \begin{eqnarray}\label{eqn15}
   \eta^{4}(A) \leq \left\||A|^{2}+|A|^{4}\right\|_{ber} \left\||A^*|^{2}+|A|^{4}\right\|_{ber}.
   \end{eqnarray}
\end{cor}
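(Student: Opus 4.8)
The plan is to obtain the statement directly as a specialization of Theorem~\ref{theo6}, since that result already supplies the bound for every parameter $r \geq 1$ and every $\alpha \in [0,1]$, so no new estimate is needed. First I would invoke Theorem~\ref{theo6} with the single value $r = 1$, which gives, for every $\alpha \in [0,1]$,
\begin{eqnarray*}
\eta^{4}(A) \leq 2^{2 \cdot 1 - 2}\left\||A|^{4\alpha}+|A|^{4}\right\|_{ber}\left\||A^*|^{4(1-\alpha)}+|A|^{4}\right\|_{ber}.
\end{eqnarray*}
The one point worth recording is that the multiplicative constant collapses, namely $2^{2r-2} = 2^{0} = 1$ at $r = 1$, so the prefactor in front of the two Berezin norms disappears entirely and we are left with the clean product of the two norms.

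Next, since the resulting estimate holds for each fixed $\alpha \in [0,1]$, the quantity $\eta^{4}(A)$ is bounded above by the infimum of the right-hand side over this compact parameter range; writing this infimum as a minimum over $\alpha \in [0,1]$ yields inequality~(\ref{eqn14}), the primary assertion. For the concluding ``in particular'' clause I would substitute the symmetric value $\alpha = \frac{1}{2}$, which forces $4\alpha = 2$ and $4(1-\alpha) = 2$, hence $|A|^{4\alpha} = |A|^{2}$ and $|A^*|^{4(1-\alpha)} = |A^*|^{2}$, reducing the bound to
\begin{eqnarray*}
\eta^{4}(A) \leq \left\||A|^{2}+|A|^{4}\right\|_{ber}\left\||A^*|^{2}+|A|^{4}\right\|_{ber},
\end{eqnarray*}
which is exactly~(\ref{eqn15}).

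I do not anticipate any genuine obstacle: the statement is a straightforward specialization, and everything reduces to substituting $r=1$ and $\alpha = \frac{1}{2}$ together with the trivial observation that the power of two in the constant becomes $1$. The only mild subtlety is bookkeeping on the exponents in $2^{2r-2}$, to be sure the prefactor is exactly $1$ rather than some other power of two; a secondary (and essentially cosmetic) point is that replacing the infimum over $\alpha$ by a minimum presumes attainment on $[0,1]$, which is harmless since the estimate in (\ref{eqn14}) is valid for every $\alpha$ regardless. Beyond this, the derivation is purely a matter of direct substitution into Theorem~\ref{theo6}.
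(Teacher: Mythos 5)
Your proposal is correct and coincides with the paper's derivation: the corollary is obtained exactly by setting $r=1$ in Theorem~\ref{theo6} (so that $2^{2r-2}=1$), minimizing over $\alpha\in[0,1]$, and then specializing to $\alpha=\tfrac{1}{2}$ for~(\ref{eqn15}).
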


\begin{remark}
	Let $A \in \mathcal{L}(\mathscr{H})$ be a normal operator. Then the inequality (\ref{eqn14}) is of the form
         \begin{eqnarray}\label{rmk2}
         \eta^{2}(A) \leq \min_{ \alpha \in [0,1]}\left\||A|^{4\alpha }+|A|^{4}\right\|^{1/2}_{ber} \left\||A|^{4(1-\alpha) }+|A|^{4}\right\|^{1/2}_{ber}.
        \end{eqnarray}
  Clearly, we have
     \begin{eqnarray*}
     	\min_{ \alpha \in [0,1]}\left\||A|^{4\alpha }+|A|^{4}\right\|^{1/2}_{ber} \left\||A|^{4(1-\alpha) }+|A|^{4}\right\|^{1/2}_{ber}
     	\leq \left\||A|^{2}+|A|^{4}\right\|_{ber}.
     \end{eqnarray*}
     Since $\|A\hat{k}_{\lambda}\|=\|A^*\hat{k}_{\lambda}\|$ for all $\lambda\in\Omega$,
      following \cite[Th. 2.]{dwb} we have,
       \begin{eqnarray*}
      	\eta^{2}(A)
      	\leq \left\||A|^{2}+|A|^{4}\right\|_{ber}.
      \end{eqnarray*}
    Therefore, we remark that the inequality (\ref{rmk2}) is better than the bound given in \cite[Th. 2]{dwb} for  normal operators. 
\end{remark}

Next upper bound reads as follows.

\begin{theorem}\label{theo7}
	Let  $A \in \mathcal{L}(\mathscr{H}).$ Then 
	\begin{eqnarray*}
		\eta^{2}(A) \leq  \min_{\alpha \in [0,1]}\left\{ \frac{1}{2}\left\|\left(|A|^{2\alpha}+|A^*|^{2(1-\alpha)}\right)^2+2|A|^4\right\|_{ber}-c\left(|A|^{2\alpha}\right)c\left(|A^*|^{2(1-\alpha)}\right)\right\}.
	\end{eqnarray*}
\end{theorem}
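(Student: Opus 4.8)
The plan is to fix $\alpha \in [0,1]$, abbreviate $P = |A|^{2\alpha}$ and $Q = |A^*|^{2(1-\alpha)}$ (both positive), and estimate the quantity $|\langle A\hat{k}_\lambda, \hat{k}_\lambda\rangle|^2 + \|A\hat{k}_\lambda\|^4$ for a fixed normalized reproducing kernel $\hat{k}_\lambda$, ultimately taking the supremum over $\lambda$ and then the minimum over $\alpha$. The two summands are handled separately: Lemma \ref{lemma2} (with $x = y = \hat{k}_\lambda$) gives $|\langle A\hat{k}_\lambda, \hat{k}_\lambda\rangle|^2 \leq \widetilde{P}(\lambda)\,\widetilde{Q}(\lambda)$, while the identity $\|A\hat{k}_\lambda\|^2 = \langle |A|^2\hat{k}_\lambda, \hat{k}_\lambda\rangle$ together with Lemma \ref{lemma1} (applied to the positive operator $|A|^2$ with $r=2$) yields $\|A\hat{k}_\lambda\|^4 \leq \langle |A|^4\hat{k}_\lambda, \hat{k}_\lambda\rangle$.

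The heart of the argument is to rewrite the product $\widetilde{P}(\lambda)\widetilde{Q}(\lambda)$ so that the constant $c(P)c(Q)$ emerges. Writing $a = \widetilde{P}(\lambda)$ and $b = \widetilde{Q}(\lambda)$, I would use the two elementary facts $c(P)\,c(Q) \leq ab$ (because $a \geq c(P) \geq 0$ and $b \geq c(Q) \geq 0$ by positivity of $P, Q$) and $ab \leq \tfrac{1}{2}(a^2+b^2)$. Combining $c(P)c(Q) \leq \tfrac{1}{2}(a^2+b^2)$ with the identity $\tfrac{1}{2}(a+b)^2 = ab + \tfrac{1}{2}(a^2+b^2)$ gives $ab \leq \tfrac{1}{2}(a+b)^2 - c(P)c(Q)$. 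Next, Lemma \ref{lemma1} applied to the positive operator $P+Q$ with $r=2$ turns the quadratic $(a+b)^2 = \langle (P+Q)\hat{k}_\lambda, \hat{k}_\lambda\rangle^2$ into the single Berezin symbol $\langle (P+Q)^2\hat{k}_\lambda, \hat{k}_\lambda\rangle$.

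Combining these estimates, for each $\lambda$ I obtain
$$|\langle A\hat{k}_\lambda, \hat{k}_\lambda\rangle|^2 + \|A\hat{k}_\lambda\|^4 \leq \frac{1}{2}\big\langle \big((P+Q)^2 + 2|A|^4\big)\hat{k}_\lambda, \hat{k}_\lambda\big\rangle - c(P)c(Q).$$
Since $(P+Q)^2 + 2|A|^4$ is a positive operator, its Berezin symbol at $\lambda$ is dominated by $\|(P+Q)^2 + 2|A|^4\|_{ber}$ (using $\textbf{ber}(\cdot) = \|\cdot\|_{ber}$ for positive operators), and the resulting right-hand side no longer depends on $\lambda$. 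Taking the supremum over $\lambda \in \Omega$ and then the minimum over $\alpha \in [0,1]$ delivers the stated bound.

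I expect the main obstacle to be selecting the correct intermediate inequality in the second step so that the subtracted term is exactly $c(P)c(Q)$ rather than a weaker constant: one must route through $\tfrac{1}{2}(a+b)^2$ in order to invoke Lemma \ref{lemma1} and linearize the square, while simultaneously retaining the full strength $ab \leq \tfrac{1}{2}(a+b)^2 - c(P)c(Q)$. Everything else is routine bookkeeping, provided one checks that each operator to which Lemma \ref{lemma1} is applied, namely $|A|^2$ and $P+Q$, is genuinely positive.
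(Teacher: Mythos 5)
Your proposal is correct and follows essentially the same route as the paper's proof: Lemma \ref{lemma2} applied with $x=y=\hat{k}_{\lambda}$, the algebraic identity linking $ab$, $\tfrac{1}{2}(a^{2}+b^{2})$ and $\tfrac{1}{2}(a+b)^{2}$ to extract the subtracted term, and Lemma \ref{lemma1} applied to the positive operators $|A|^{2\alpha}+|A^*|^{2(1-\alpha)}$ and $|A|^{2}$. The only (immaterial) difference is that you insert the bound $\tfrac{1}{2}(a^{2}+b^{2})\geq ab\geq c(P)c(Q)$ at the intermediate stage, whereas the paper carries the term $-\langle P\hat{k}_{\lambda},\hat{k}_{\lambda}\rangle\langle Q\hat{k}_{\lambda},\hat{k}_{\lambda}\rangle$ to the final line before replacing it by $-c(P)c(Q)$.
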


\begin{proof}
	    Let $\hat{k}_{\lambda}$ be a normalized reproducing kernel of $\mathscr{H}.$ Then
	    \begin{eqnarray*}
	    	&& |\langle      A\hat{k}_{\lambda},\hat{k}_{\lambda}\rangle|^{2}+\|A\hat{k}_{\lambda}\|^4 \\ 
	    	&& \leq \langle |A|^{2\alpha}\hat{k}_{\lambda},\hat{k}_{\lambda} \rangle \langle |A^*|^{2(1-\alpha)}\hat{k}_{\lambda},\hat{k}_{\lambda}\rangle+\langle      |A|^2\hat{k}_{\lambda},\hat{k}_{\lambda}\rangle^2\,\,\,\,\,\,\Big(\mbox{by Lemma \ref{lemma2}}\Big)\\
	    	&&  \leq \frac{1}{2}\left(\langle |A|^{2\alpha}\hat{k}_{\lambda},\hat{k}_{\lambda} \rangle^2 + \langle |A^*|^{2(1-\alpha)}\hat{k}_{\lambda},\hat{k}_{\lambda}\rangle^2 \right)+\langle |A|^2\hat{k}_{\lambda},\hat{k}_{\lambda}\rangle^2\\
	    	&&  = \frac{1}{2}\left(\langle |A|^{2\alpha}\hat{k}_{\lambda},\hat{k}_{\lambda} \rangle + \langle |A^*|^{2(1-\alpha)}\hat{k}_{\lambda},\hat{k}_{\lambda}\rangle \right)^2+\langle |A|^2\hat{k}_{\lambda},\hat{k}_{\lambda}\rangle^2\\
	    	&&\,\,\,\,\,\, -\langle |A|^{2\alpha}\hat{k}_{\lambda},\hat{k}_{\lambda} \rangle \langle |A^*|^{2(1-\alpha)}\hat{k}_{\lambda},\hat{k}_{\lambda}\rangle\\
	    	&&  = \frac{1}{2}\left\langle \left(|A|^{2\alpha} + |A^*|^{2(1-\alpha)}\right)\hat{k}_{\lambda},\hat{k}_{\lambda}\right \rangle ^2+\langle |A|^2\hat{k}_{\lambda},\hat{k}_{\lambda}\rangle^2
	    	 -\langle |A|^{2\alpha}\hat{k}_{\lambda},\hat{k}_{\lambda} \rangle \langle |A^*|^{2(1-\alpha)}\hat{k}_{\lambda},\hat{k}_{\lambda}\rangle\\
	    	 &&  \leq \frac{1}{2}\left\langle \left(|A|^{2\alpha} + |A^*|^{2(1-\alpha)}\right)^2\hat{k}_{\lambda},\hat{k}_{\lambda}\right \rangle +\langle |A|^4\hat{k}_{\lambda},\hat{k}_{\lambda}\rangle
	    	 -\langle |A|^{2\alpha}\hat{k}_{\lambda},\hat{k}_{\lambda} \rangle \langle |A^*|^{2(1-\alpha)}\hat{k}_{\lambda},\hat{k}_{\lambda}\rangle\\
	    	 && \,\,\,\, \,\,\,\,\,\,\,\,\,\,\,\,\,\,\,\,\,\,\,\,\,\,\,\,\,\,\,\,\,\,\,\,\,\,\,\,\,\,\,\,\,\,\,\,\,\,\,\,\,\,\,\,\,\,\,\,\,\,\,\,\,\,\,\,\,\,\,\,\,\,\,\,\,\,\,\,\,\,\,\,\,\,\,\,\,\,\,\,\,\,\,\,\,\,\,\,\,\,\,\,\,\,\,\,\,\,\,\,\,\,\,\,\,\,\,\,\,\,\,\,\,\,\,\,\,\,\,\,\,\,\,\,\,\,\,\,\,\,\,\,\,\,\,\,\,\,\,\,\,\,\,\,\,\Big(\mbox{by Lemma \ref{lemma1}}\Big)\\
	    	  &&  = \frac{1}{2}\left\langle \left(\left(|A|^{2\alpha} + |A^*|^{2(1-\alpha)}\right)^2+2|A|^4 \right)\hat{k}_{\lambda},\hat{k}_{\lambda}\right \rangle 
	    	 -\langle |A|^{2\alpha}\hat{k}_{\lambda},\hat{k}_{\lambda} \rangle \langle |A^*|^{2(1-\alpha)}\hat{k}_{\lambda},\hat{k}_{\lambda}\rangle\\
	    	 && \leq \frac{1}{2}\left\|\left(|A|^{2\alpha}+|A^*|^{2(1-\alpha)}\right)^2+2|A|^4\right\|_{ber}-c\left(|A|^{2\alpha}\right)c\left(|A^*|^{2(1-\alpha)}\right).
	    \end{eqnarray*}
    Now, taking supremum over all $\lambda\in\Omega$, we get
    \begin{eqnarray}\label{eqn16}
    	\eta^{2}(A) \leq \frac{1}{2}\left\|\left(|A|^{2\alpha}+|A^*|^{2(1-\alpha)}\right)^2+2|A|^4\right\|_{ber}-c\left(|A|^{2\alpha}\right)c\left(|A^*|^{2(1-\alpha)}\right).
    \end{eqnarray}
    Clearly, the inequality (\ref{eqn16}) holds for all $\alpha \in [0,1].$ Therefore, taking minimum over all $\alpha \in [0,1],$ we get the required result.
 \end{proof}

In particular, choosing $\alpha = \frac{1}{2}$ in Theorem \ref{theo7} we get the following inequality.

\begin{cor}\label{cor3}
	Let  $A \in \mathcal{L}(\mathscr{H}).$ Then 
		\begin{eqnarray*}
		\eta^{2}(A) \leq   \left\|\left(|A|+|A^*|\right)^2+2|A|^4\right\|_{ber}
		-c\left(|A|\right)c\left(|A^*|\right).
	\end{eqnarray*}
\end{cor}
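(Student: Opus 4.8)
The plan is to derive Corollary \ref{cor3} as a direct specialization of Theorem \ref{theo7} at the parameter value $\alpha = \frac{1}{2}$. Because the right-hand side of Theorem \ref{theo7} is a minimum over $\alpha \in [0,1]$, it cannot exceed the value of the bracketed expression at any single admissible $\alpha$. In particular, for each fixed $\alpha$ one has
\[
\eta^{2}(A) \leq \frac{1}{2}\left\|\left(|A|^{2\alpha}+|A^*|^{2(1-\alpha)}\right)^{2}+2|A|^{4}\right\|_{ber}-c\left(|A|^{2\alpha}\right)c\left(|A^*|^{2(1-\alpha)}\right),
\]
so I would simply evaluate this estimate at $\alpha = \frac{1}{2}$.

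The next step is the routine exponent arithmetic. At $\alpha = \frac{1}{2}$ we have $2\alpha = 1$ and $2(1-\alpha) = 1$, hence $|A|^{2\alpha} = |A|$ and $|A^*|^{2(1-\alpha)} = |A^*|$, and correspondingly $c(|A|^{2\alpha}) = c(|A|)$ and $c(|A^*|^{2(1-\alpha)}) = c(|A^*|)$. Substituting these identities into the displayed inequality yields
\[
\eta^{2}(A) \leq \frac{1}{2}\left\|\left(|A|+|A^*|\right)^{2}+2|A|^{4}\right\|_{ber}-c\left(|A|\right)c\left(|A^*|\right).
\]

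Finally, since the Berezin norm is nonnegative we have $\frac{1}{2}\|X\|_{ber} \leq \|X\|_{ber}$ for $X = \left(|A|+|A^*|\right)^{2}+2|A|^{4}$, so the bound asserted in Corollary \ref{cor3} follows a fortiori from the sharper estimate just obtained. I do not expect any genuine obstacle here, since the whole argument is a one-line specialization of Theorem \ref{theo7}. The only point worth flagging is that the clean substitution actually produces the factor $\frac{1}{2}$ in front of the Berezin-norm term, which the displayed statement of the corollary appears to have suppressed; the specialization therefore in fact delivers a strictly stronger estimate than the one as written.
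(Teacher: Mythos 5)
Your proposal is correct and is exactly the paper's own route: Corollary \ref{cor3} is obtained by setting $\alpha=\tfrac{1}{2}$ in Theorem \ref{theo7}. You are also right to flag that the clean substitution yields the coefficient $\tfrac{1}{2}$ in front of $\left\|\left(|A|+|A^*|\right)^2+2|A|^4\right\|_{ber}$, so the stated corollary (which omits that factor) is a weakened form that follows a fortiori; the sharper inequality with the $\tfrac{1}{2}$ is what the theorem actually delivers.
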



Our next bound reads as:

\begin{theorem}\label{theo8}
	Let  $A \in \mathcal{L}(\mathscr{H}).$ Then 
	\begin{eqnarray*}
		\eta^{2}(A) \leq   \min_{\alpha \in [0,1]}\left\| \alpha |A|^2+(1-\alpha)|A^*|^2+|A|^4\right\|_{ber}.
	\end{eqnarray*}
\end{theorem}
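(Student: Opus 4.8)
The plan is to bound the quantity $|\widetilde{A}(\lambda)|^2+\|A\hat{k}_\lambda\|^4$ pointwise by the Berezin symbol of a single positive operator, then pass to the supremum over $\lambda$ and finally to the minimum over $\alpha$, exactly in the spirit of the proofs of Theorems \ref{theo5}, \ref{theo6} and \ref{theo7}. First I would fix a normalized reproducing kernel $\hat{k}_\lambda$ and treat the two summands separately. For the self-term I would write $\|A\hat{k}_\lambda\|^4=\langle |A|^2\hat{k}_\lambda,\hat{k}_\lambda\rangle^2$ and apply Lemma \ref{lemma1} to the positive operator $|A|^2$ with $r=2$, obtaining $\|A\hat{k}_\lambda\|^4\leq \langle |A|^4\hat{k}_\lambda,\hat{k}_\lambda\rangle$.

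For the mixed term I would apply Lemma \ref{lemma2} with $x=y=\hat{k}_\lambda$ to get $|\langle A\hat{k}_\lambda,\hat{k}_\lambda\rangle|^2\leq \langle |A|^{2\alpha}\hat{k}_\lambda,\hat{k}_\lambda\rangle\,\langle |A^*|^{2(1-\alpha)}\hat{k}_\lambda,\hat{k}_\lambda\rangle$. The next step is to trade the fractional powers for integer powers: applying Lemma \ref{lemma1} to the positive operator $|A|^{2\alpha}$ with exponent $1/\alpha\geq 1$ yields $\langle |A|^{2\alpha}\hat{k}_\lambda,\hat{k}_\lambda\rangle\leq \langle |A|^2\hat{k}_\lambda,\hat{k}_\lambda\rangle^{\alpha}$, and symmetrically $\langle |A^*|^{2(1-\alpha)}\hat{k}_\lambda,\hat{k}_\lambda\rangle\leq \langle |A^*|^2\hat{k}_\lambda,\hat{k}_\lambda\rangle^{1-\alpha}$. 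Thus $|\langle A\hat{k}_\lambda,\hat{k}_\lambda\rangle|^2\leq \langle |A|^2\hat{k}_\lambda,\hat{k}_\lambda\rangle^{\alpha}\langle |A^*|^2\hat{k}_\lambda,\hat{k}_\lambda\rangle^{1-\alpha}$, which is a weighted geometric mean. Invoking Lemma \ref{lemma4} in the case $r=0\leq s=1$ (the weighted AM--GM inequality $a^{\alpha}b^{1-\alpha}\leq \alpha a+(1-\alpha)b$) linearizes this to $\langle (\alpha|A|^2+(1-\alpha)|A^*|^2)\hat{k}_\lambda,\hat{k}_\lambda\rangle$.

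Adding the two bounds gives $|\widetilde{A}(\lambda)|^2+\|A\hat{k}_\lambda\|^4\leq \langle (\alpha|A|^2+(1-\alpha)|A^*|^2+|A|^4)\hat{k}_\lambda,\hat{k}_\lambda\rangle$. Since $\alpha\in[0,1]$, the operator $P_\alpha:=\alpha|A|^2+(1-\alpha)|A^*|^2+|A|^4$ is positive, so $\langle P_\alpha\hat{k}_\lambda,\hat{k}_\lambda\rangle=\widetilde{P_\alpha}(\lambda)\leq \textbf{ber}(P_\alpha)=\|P_\alpha\|_{ber}$, the last equality being the identity $\textbf{ber}(\cdot)=\|\cdot\|_{ber}$ for positive operators recorded in the introduction. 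Taking the supremum over $\lambda\in\Omega$ gives $\eta^2(A)\leq \|P_\alpha\|_{ber}$ for every $\alpha$, and taking the minimum over $\alpha\in[0,1]$ yields the asserted bound.

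The only delicate point is the power-reduction step, since the exponent $1/\alpha$ (respectively $1/(1-\alpha)$) degenerates at $\alpha=0$ (respectively $\alpha=1$). I would dispose of these endpoints directly: at $\alpha=1$, Lemma \ref{lemma2} already gives $|\langle A\hat{k}_\lambda,\hat{k}_\lambda\rangle|^2\leq \langle |A|^2\hat{k}_\lambda,\hat{k}_\lambda\rangle$, matching $\alpha|A|^2+(1-\alpha)|A^*|^2=|A|^2$, and symmetrically at $\alpha=0$; so Lemma \ref{lemma1} is only applied on the open interval $(0,1)$, where $1/\alpha$ and $1/(1-\alpha)$ are legitimately at least $1$. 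Everything else is a routine combination of the three lemmas together with the positivity identity.
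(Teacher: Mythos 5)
Your proposal is correct, but it reaches the key pointwise estimate by a genuinely different route than the paper. The paper's proof handles the term $|\widetilde{A}(\lambda)|^{2}$ in one line: it writes it as the convex combination $\alpha|\langle A\hat{k}_{\lambda},\hat{k}_{\lambda}\rangle|^{2}+(1-\alpha)|\langle \hat{k}_{\lambda},A^{*}\hat{k}_{\lambda}\rangle|^{2}$ and applies the ordinary Cauchy--Schwarz inequality to each summand, giving $\alpha\|A\hat{k}_{\lambda}\|^{2}+(1-\alpha)\|A^{*}\hat{k}_{\lambda}\|^{2}=\langle(\alpha|A|^{2}+(1-\alpha)|A^{*}|^{2})\hat{k}_{\lambda},\hat{k}_{\lambda}\rangle$ directly, with no case analysis at the endpoints. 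You instead go through the mixed Schwarz inequality (Lemma \ref{lemma2}), a H\"older--McCarthy power reduction via Lemma \ref{lemma1} with exponents $1/\alpha$ and $1/(1-\alpha)$, and then the weighted AM--GM inequality (Lemma \ref{lemma4} with $r=0\leq s=1$); this forces you to treat $\alpha=0,1$ separately, which you do correctly. The treatment of $\|A\hat{k}_{\lambda}\|^{4}$ and the passage to the Berezin norm, supremum, and minimum are the same in both arguments. What your longer route buys is a strictly sharper intermediate bound, namely the geometric mean $\langle|A|^{2}\hat{k}_{\lambda},\hat{k}_{\lambda}\rangle^{\alpha}\langle|A^{*}|^{2}\hat{k}_{\lambda},\hat{k}_{\lambda}\rangle^{1-\alpha}$, which you then relax to the arithmetic mean to match the stated bound; if you kept it, you would obtain a refinement of the theorem. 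What the paper's route buys is brevity and the avoidance of the endpoint degeneracy altogether. One minor remark: your appeal to the identity $\textbf{ber}(P)=\|P\|_{ber}$ for positive $P$ is harmless but unnecessary, since $\widetilde{P}(\lambda)\leq\|P\|_{ber}$ follows for any operator directly from the definition of the Berezin norm.
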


\begin{proof}
	 Let $\hat{k}_{\lambda}$ be a normalized reproducing kernel of $\mathscr{H}.$ Then for any $\alpha \in [0,1]$, by using Cauchy-Schwarz inequality, we get
	 \begin{eqnarray*}
	       |\langle A\hat{k}_{\lambda},\hat{k}_{\lambda}\rangle|^{2} && =\alpha |\langle A\hat{k}_{\lambda},\hat{k}_{\lambda}\rangle|^{2} + (1-\alpha)|\langle A\hat{k}_{\lambda},\hat{k}_{\lambda}\rangle|^{2}\\
	      &&  =\alpha |\langle A\hat{k}_{\lambda},\hat{k}_{\lambda}\rangle|^{2} + (1-\alpha)|\langle \hat{k}_{\lambda},A^*\hat{k}_{\lambda}\rangle|^{2}\\
	      && \leq \alpha \|A\hat{k}_{\lambda}\|^2+(1-\alpha) \|A^*\hat{k}_{\lambda}\|^2\\
	      &&  = \alpha \langle |A|^2\hat{k}_{\lambda},\hat{k}_{\lambda}\rangle+(1-\alpha)\langle |A^*|^2\hat{k}_{\lambda},\hat{k}_{\lambda}\rangle\\
	      && = \left\langle (\alpha |A|^2+(1-\alpha)|A^*|^2)\hat{k}_{\lambda},\hat{k}_{\lambda}\right\rangle. 
	 \end{eqnarray*}
     Therefore, we have
	  \begin{eqnarray*}
	 	&& |\langle A\hat{k}_{\lambda},\hat{k}_{\lambda}\rangle|^{2}+\|A\hat{k}_{\lambda}\|^4\\
	 	&& \leq \left\langle (\alpha |A|^2+(1-\alpha)|A^*|^2)\hat{k}_{\lambda},\hat{k}_{\lambda}\right\rangle + \langle |A|^2\hat{k}_{\lambda},\hat{k}_{\lambda}\rangle ^{2}\\
	 	&& \leq \left\langle (\alpha |A|^2+(1-\alpha)|A^*|^2)\hat{k}_{\lambda},\hat{k}_{\lambda}\right\rangle + \langle |A|^4\hat{k}_{\lambda},\hat{k}_{\lambda}\rangle \,\,\,\,\,\,\Big(\mbox{by Lemma \ref{lemma1}}\Big)\\
	 	&& =\left\langle (\alpha |A|^2+(1-\alpha)|A^*|^2+|A|^4)\hat{k}_{\lambda},\hat{k}_{\lambda}\right\rangle\\
	 	&& \leq \left\| \alpha |A|^2+(1-\alpha)|A^*|^2+|A|^4\right\|_{ber}.
	 \end{eqnarray*}
  Taking supremum over all $\lambda\in\Omega$, we get
   \begin{eqnarray}\label{eqn17}
  \eta^{2}(A) \leq \left\| \alpha |A|^2+(1-\alpha)|A^*|^2+|A|^4\right\|_{ber}.
  \end{eqnarray}
   Clearly for all $\alpha \in [0,1],$ the inequality (\ref{eqn17}) holds. Therefore, taking minimum over all $\alpha \in [0,1],$ we get the desired result.
\end{proof}

\begin{theorem}\label{theo9}
	Let  $A \in \mathcal{L}(\mathscr{H}).$ Then 
	\begin{eqnarray*}
		\eta^{2}(A) \leq   \min\{ \beta_1(A),\beta_2(A)\},
	\end{eqnarray*}
where
   \begin{eqnarray*}
   	 \beta_1(A)= \min_{ \alpha \in [0,1]} \left\{\frac{\alpha}{2}\textbf{ber}(A^2)  +\left\| \frac{\alpha}{4}|A|^{2}+\left(1-\frac{3}{4}\alpha\right)|A^*|^{2} +|A|^4 \right\|_{ber}\right\}
   \end{eqnarray*}
and 
   \begin{eqnarray*}
  	\beta_2(A) =  \min_{ \alpha \in [0,1]} \left\{\frac{\alpha}{2}\textbf{ber}(A^2)  +\left\| \left(1-\frac{3}{4}\alpha\right)|A|^{2}+\frac{\alpha}{4}|A^*|^{2} +|A|^4 \right\|_{ber}\right\}.
  \end{eqnarray*}
\end{theorem}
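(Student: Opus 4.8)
The plan is to follow the pattern of the preceding proofs: fix a normalized reproducing kernel $\hat{k}_{\lambda}$, bound the quantity $|\langle A\hat{k}_{\lambda},\hat{k}_{\lambda}\rangle|^{2}+\|A\hat{k}_{\lambda}\|^4$ from above, and then take the supremum over $\lambda\in\Omega$. The novel ingredient is to introduce the parameter $\alpha$ by splitting
$$|\langle A\hat{k}_{\lambda},\hat{k}_{\lambda}\rangle|^{2}=\alpha|\langle A\hat{k}_{\lambda},\hat{k}_{\lambda}\rangle|^{2}+(1-\alpha)|\langle A\hat{k}_{\lambda},\hat{k}_{\lambda}\rangle|^{2},$$
and to treat the two summands by different inequalities. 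I would keep the term $\|A\hat{k}_{\lambda}\|^4=\langle|A|^2\hat{k}_{\lambda},\hat{k}_{\lambda}\rangle^2$ aside and bound it only at the end by $\langle|A|^4\hat{k}_{\lambda},\hat{k}_{\lambda}\rangle$ via Lemma \ref{lemma1} applied to the positive operator $|A|^2$ with $r=2$.

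For the first summand I would apply Buzano's inequality (Lemma \ref{lemma3}) with $e=\hat{k}_{\lambda}$, $x=A\hat{k}_{\lambda}$ and $y=A^*\hat{k}_{\lambda}$. Since $\langle \hat{k}_{\lambda},A^*\hat{k}_{\lambda}\rangle=\langle A\hat{k}_{\lambda},\hat{k}_{\lambda}\rangle$ and $\langle A\hat{k}_{\lambda},A^*\hat{k}_{\lambda}\rangle=\langle A^2\hat{k}_{\lambda},\hat{k}_{\lambda}\rangle$, this yields
$$\alpha|\langle A\hat{k}_{\lambda},\hat{k}_{\lambda}\rangle|^{2}\leq \frac{\alpha}{2}\|A\hat{k}_{\lambda}\|\,\|A^*\hat{k}_{\lambda}\|+\frac{\alpha}{2}|\langle A^2\hat{k}_{\lambda},\hat{k}_{\lambda}\rangle|.$$
Bounding $\|A\hat{k}_{\lambda}\|\,\|A^*\hat{k}_{\lambda}\|\leq\frac{1}{2}\left(\langle|A|^2\hat{k}_{\lambda},\hat{k}_{\lambda}\rangle+\langle|A^*|^2\hat{k}_{\lambda},\hat{k}_{\lambda}\rangle\right)$ by the arithmetic--geometric mean inequality produces the coefficient $\frac{\alpha}{4}$ on each of $|A|^2$ and $|A^*|^2$. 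For the second summand I would use the Cauchy--Schwarz inequality in one of two ways: either $|\langle A\hat{k}_{\lambda},\hat{k}_{\lambda}\rangle|^{2}\leq\langle|A|^2\hat{k}_{\lambda},\hat{k}_{\lambda}\rangle$, or, using $|\langle A\hat{k}_{\lambda},\hat{k}_{\lambda}\rangle|=|\langle A^*\hat{k}_{\lambda},\hat{k}_{\lambda}\rangle|$, the alternative $|\langle A\hat{k}_{\lambda},\hat{k}_{\lambda}\rangle|^{2}\leq\langle|A^*|^2\hat{k}_{\lambda},\hat{k}_{\lambda}\rangle$. The first choice gives the combined coefficient $1-\frac{3}{4}\alpha$ on $|A|^2$ and $\frac{\alpha}{4}$ on $|A^*|^2$ (leading to $\beta_2$), while the second choice interchanges these two roles (leading to $\beta_1$).

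Collecting terms, in either case I obtain
$$|\langle A\hat{k}_{\lambda},\hat{k}_{\lambda}\rangle|^{2}+\|A\hat{k}_{\lambda}\|^4\leq \frac{\alpha}{2}|\langle A^2\hat{k}_{\lambda},\hat{k}_{\lambda}\rangle|+\langle P_{\alpha}\hat{k}_{\lambda},\hat{k}_{\lambda}\rangle,$$
where $P_{\alpha}$ is the relevant combination $\frac{\alpha}{4}|A|^{2}+\left(1-\frac{3}{4}\alpha\right)|A^*|^{2}+|A|^4$ (or the same with $|A|^2$ and $|A^*|^2$ interchanged). The point I would emphasise, and the only step that needs care, is that $P_{\alpha}$ is positive for every $\alpha\in[0,1]$: since $1-\frac{3}{4}\alpha\geq\frac{1}{4}>0$, the operator $P_{\alpha}$ is a nonnegative combination of the positive operators $|A|^2,\,|A^*|^2,\,|A|^4$. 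This positivity is exactly what licenses the bound $\langle P_{\alpha}\hat{k}_{\lambda},\hat{k}_{\lambda}\rangle\leq\|P_{\alpha}\|_{ber}$ through the identification of $\textbf{ber}$ with the Berezin norm on positive operators. Taking the supremum over $\lambda\in\Omega$ (using subadditivity of the supremum on the right) gives, for each fixed $\alpha$, the bound with the term $\frac{\alpha}{2}\textbf{ber}(A^2)$, and minimising over $\alpha\in[0,1]$ yields $\beta_1(A)$ and $\beta_2(A)$ respectively; since both bounds are valid, $\eta^2(A)\leq\min\{\beta_1(A),\beta_2(A)\}$. Apart from this sign check, what remains is purely the bookkeeping of the coefficients.
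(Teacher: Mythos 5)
Your proposal is correct and follows essentially the same route as the paper: the same $\alpha/(1-\alpha)$ splitting, Buzano's inequality (Lemma \ref{lemma3}) plus the arithmetic--geometric mean bound on the $\alpha$-part, Cauchy--Schwarz (in its two forms, giving $\beta_1$ and $\beta_2$) on the $(1-\alpha)$-part, and Lemma \ref{lemma1} for the $\|A\hat{k}_{\lambda}\|^4$ term, followed by the supremum and the minimisation over $\alpha$. The only cosmetic difference is your emphasis on the positivity of $P_\alpha$, which is not actually needed to pass to $\|P_\alpha\|_{ber}$, since $\langle P_\alpha\hat{k}_{\lambda},\hat{k}_{\lambda}\rangle\leq|\langle P_\alpha\hat{k}_{\lambda},\hat{k}_{\lambda}\rangle|\leq\|P_\alpha\|_{ber}$ holds by the definition of the Berezin norm for any operator.
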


\begin{proof}
	 Let $\hat{k}_{\lambda}$ be a normalized reproducing kernel of $\mathscr{H}$ and $\alpha \in [0,1].$ Then using Cauchy-Schwarz inequality, we get
	  \begin{eqnarray*}
	       |\langle A\hat{k}_{\lambda},\hat{k}_{\lambda}\rangle|^{2} && \leq   \alpha |\langle A\hat{k}_{\lambda},\hat{k}_{\lambda}\rangle|^{2} + (1-\alpha) \|A^*\hat{k}_{\lambda}\|^2\\
	       && = \alpha |\langle A\hat{k}_{\lambda},\hat{k}_{\lambda}\rangle|^{2} + (1-\alpha) \langle |A^*|^2\hat{k}_{\lambda},\hat{k}_{\lambda}\rangle.
	 \end{eqnarray*}
    So, we get 
     \begin{eqnarray*}
    	&& |\langle A\hat{k}_{\lambda},\hat{k}_{\lambda}\rangle|^{2}+\|A\hat{k}_{\lambda}\|^4\\
    	&& \leq \alpha |\langle A\hat{k}_{\lambda},\hat{k}_{\lambda}\rangle|^{2} + (1-\alpha) \langle |A^*|^2\hat{k}_{\lambda},\hat{k}_{\lambda}\rangle+\langle |A|^2\hat{k}_{\lambda},\hat{k}_{\lambda}\rangle^2\\
    	&& \leq \frac{\alpha}{2}\left(|\langle A\hat{k}_{\lambda},A^*\hat{k}_{\lambda}\rangle|+\|A\hat{k}_{\lambda}\|\|A^*\hat{k}_{\lambda}\|\right) + (1-\alpha) \langle |A^*|^2\hat{k}_{\lambda},\hat{k}_{\lambda}\rangle+\langle |A|^2\hat{k}_{\lambda},\hat{k}_{\lambda}\rangle^2\\
    	&&\,\,\,\,\,\,\,\,\,\,\,\,\,\,\,\,\,\,\,\,\,\,\,\,\,\,\,\,\,\,\,\,\,\,\,\,\,\,\,\,\,\,\,\,\,\,\,\,\,\,\,\,\,\,\,\,\,\,\,\,\,\,\,\,\,\,\,\,\,\,\,\,\,\,\,\,\,\,\,\,\,\,\,\,\,\,\,\,\,\,\,\,\,\,\,\,\,\,\,\,\,\,\,\,\,\,\,\,\,\,\,\,\,\,\,\,\,\,\,\,\,\,\,\,\,\,\,\,\,\,\,\,\,\,\,\,\,\,\,\,\,\,\,\Big(\mbox{by Lemma \ref{lemma3}}\Big)\\
    	&& \leq \frac{\alpha}{2}|\langle A^2\hat{k}_{\lambda},\hat{k}_{\lambda}\rangle|+\frac{\alpha}{4}\left(\|A\hat{k}_{\lambda}\|^2+\|A^*\hat{k}_{\lambda}\|^2\right) + (1-\alpha) \langle |A^*|^2\hat{k}_{\lambda},\hat{k}_{\lambda}\rangle+\langle |A|^2\hat{k}_{\lambda},\hat{k}_{\lambda}\rangle^2\\
    	&& = \frac{\alpha}{2}|\langle A^2\hat{k}_{\lambda},\hat{k}_{\lambda}\rangle|+ \left\langle \left(\frac{\alpha}{4}|A|^{2}+\left(1-\frac{3}{4}\alpha\right)|A^*|^{2}\right)\hat{k}_{\lambda},\hat{k}_{\lambda}\right\rangle+\langle |A|^2\hat{k}_{\lambda},\hat{k}_{\lambda}\rangle^2\\
    	&& \leq \frac{\alpha}{2}|\langle A^2\hat{k}_{\lambda},\hat{k}_{\lambda}\rangle|+ \left\langle \left(\frac{\alpha}{4}|A|^{2}+\left(1-\frac{3}{4}\alpha\right)|A^*|^{2}\right)\hat{k}_{\lambda},\hat{k}_{\lambda}\right\rangle+\langle |A|^4\hat{k}_{\lambda},\hat{k}_{\lambda}\rangle\\
    	&&\,\,\,\,\,\,\,\,\,\,\,\,\,\,\,\,\,\,\,\,\,\,\,\,\,\,\,\,\,\,\,\,\,\,\,\,\,\,\,\,\,\,\,\,\,\,\,\,\,\,\,\,\,\,\,\,\,\,\,\,\,\,\,\,\,\,\,\,\,\,\,\,\,\,\,\,\,\,\,\,\,\,\,\,\,\,\,\,\,\,\,\,\,\,\,\,\,\,\,\,\,\,\,\,\,\,\,\,\,\,\,\,\,\,\,\,\,\,\,\,\,\,\,\,\,\,\,\,\,\,\,\,\,\,\,\,\,\,\,\,\,\,\,\Big(\mbox{by Lemma \ref{lemma1}}\Big)\\
    	&& = \frac{\alpha}{2}|\langle A^2\hat{k}_{\lambda},\hat{k}_{\lambda}\rangle|+ \left\langle \left(\frac{\alpha}{4}|A|^{2}+\left(1-\frac{3}{4}\alpha\right)|A^*|^{2}+|A|^4\right)\hat{k}_{\lambda},\hat{k}_{\lambda}\right\rangle\\
    	&& \leq \frac{\alpha}{2}\textbf{ber}(A^2)  +\left\| \frac{\alpha}{4}|A|^{2}+\left(1-\frac{3}{4}\alpha\right)|A^*|^{2} +|A|^4 \right\|_{ber}.
    \end{eqnarray*}
     Therefore, taking supremum over all $\lambda\in\Omega$, we get
      \begin{eqnarray}\label{eqn18}
     	\eta^2(A) \leq \frac{\alpha}{2}\textbf{ber}(A^2)  +\left\| \frac{\alpha}{4}|A|^{2}+\left(1-\frac{3}{4}\alpha\right)|A^*|^{2} +|A|^4 \right\|_{ber}.
     \end{eqnarray}
     As the inequality holds for all $\alpha \in [0,1],$ taking minimum over all $\alpha \in [0,1]$ we get
      \begin{eqnarray}\label{eqn19}
     \eta^2(A) \leq  \min_{ \alpha \in [0,1]} \left\{\frac{\alpha}{2}\textbf{ber}(A^2)  +\left\| \frac{\alpha}{4}|A|^{2}+\left(1-\frac{3}{4}\alpha\right)|A^*|^{2} +|A|^4 \right\|_{ber}\right\}.
     \end{eqnarray}
      Again using Cauchy-Schwarz inequality, we also get
      \begin{eqnarray*}
      	|\langle A\hat{k}_{\lambda},\hat{k}_{\lambda}\rangle|^{2} && \leq   \alpha |\langle A\hat{k}_{\lambda},\hat{k}_{\lambda}\rangle|^{2} + (1-\alpha) \|A\hat{k}_{\lambda}\|^2\\
      	&& = \alpha |\langle A\hat{k}_{\lambda},\hat{k}_{\lambda}\rangle|^{2} + (1-\alpha) \langle |A|^2\hat{k}_{\lambda},\hat{k}_{\lambda}\rangle.
      \end{eqnarray*}
  Proceeding similarly as above, we also get
   \begin{eqnarray}\label{eqn20}
  \eta^2(A) \leq  \min_{ \alpha \in [0,1]} \left\{\frac{\alpha}{2}\textbf{ber}(A^2)  +\left\| \left(1-\frac{3}{4}\alpha\right)|A|^{2}+\frac{\alpha}{4}|A^*|^{2} +|A|^4 \right\|_{ber}\right\}.
  \end{eqnarray}
  Therefore, combining (\ref{eqn19}) and (\ref{eqn20}) we get the desired result.
\end{proof}

In particular, choosing $\alpha=0$ and $\alpha=1$, respectively, in Theorem \ref{theo9} we get the following corollary.

\begin{cor}\label{cor4}
	Let  $A \in \mathcal{L}(\mathscr{H}).$ Then 
	\begin{eqnarray*}
		(i)&& \eta^2(A) \leq \min\left\{ \left\| |A|^2+|A|^4 \right\|_{ber},\left\||A^*|^2+|A|^4 \right\|_{ber} \right\},\\
		(ii)&& \eta^2(A) \leq \frac{1}{4}\left\| |A|^2+|A^*|^2 +4|A|^4\right\|_{ber}+\frac{1}{2}\textbf{ber}(A^2).
	\end{eqnarray*}
\end{cor}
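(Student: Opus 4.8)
The plan is to deduce both inequalities directly from Theorem \ref{theo9} by evaluating the two minima $\beta_1(A)$ and $\beta_2(A)$ at suitably chosen values of $\alpha$. The point is that each of $\beta_1(A)$ and $\beta_2(A)$ is an infimum over $\alpha\in[0,1]$, so any particular admissible choice of $\alpha$ produces an upper bound for it; combined with the conclusion $\eta^2(A)\leq\min\{\beta_1(A),\beta_2(A)\}$ already supplied by Theorem \ref{theo9}, every such substitution yields a valid upper bound for $\eta^2(A)$. No new estimate is needed; the whole argument is specialization.

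For part (i) I would take $\alpha=0$. In the expression defining $\beta_1(A)$ this annihilates the $\frac{\alpha}{2}\textbf{ber}(A^2)$ term and the coefficient $\frac{\alpha}{4}$ of $|A|^2$, leaving the single Berezin norm $\bigl\||A^*|^2+|A|^4\bigr\|_{ber}$, so that $\beta_1(A)\leq\bigl\||A^*|^2+|A|^4\bigr\|_{ber}$. The identical substitution $\alpha=0$ in $\beta_2(A)$ instead keeps the $|A|^2$ term with coefficient $1-\tfrac34\alpha=1$ and drops $|A^*|^2$, giving $\beta_2(A)\leq\bigl\||A|^2+|A|^4\bigr\|_{ber}$. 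Since $\eta^2(A)\leq\min\{\beta_1(A),\beta_2(A)\}$ is at most each of these two quantities, it is at most their minimum, which is precisely the bound asserted in (i).

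For part (ii) I would instead set $\alpha=1$, say in $\beta_1(A)$ (the choice $\alpha=1$ in $\beta_2(A)$ returns the same value by the evident symmetry between the two defining expressions). Then the term $\frac{\alpha}{2}\textbf{ber}(A^2)$ becomes $\frac12\textbf{ber}(A^2)$, while inside the Berezin norm the coefficients collapse: $\frac{\alpha}{4}=\frac14$ and $1-\frac34\alpha=\frac14$, so the Hermitian summand is $\frac14|A|^2+\frac14|A^*|^2+|A|^4=\frac14\bigl(|A|^2+|A^*|^2+4|A|^4\bigr)$. Factoring the scalar $\frac14$ out of the norm gives $\beta_1(A)\leq\frac14\bigl\||A|^2+|A^*|^2+4|A|^4\bigr\|_{ber}+\frac12\textbf{ber}(A^2)$, and since $\eta^2(A)\leq\beta_1(A)$ this is exactly (ii).

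The main obstacle is essentially nonexistent, as both parts are routine evaluations of the free parameter in Theorem \ref{theo9}; the only step that warrants a moment's attention is the coefficient arithmetic at $\alpha=1$, where one must check that $1-\tfrac34\alpha$ and $\tfrac{\alpha}{4}$ both reduce to $\tfrac14$ so that the two positive summands merge into the symmetric combination $|A|^2+|A^*|^2$ appearing in the statement.
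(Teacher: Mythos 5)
Your proof is correct and is exactly the paper's argument: the corollary is obtained by specializing Theorem \ref{theo9} at $\alpha=0$ (in both $\beta_1$ and $\beta_2$) for part (i) and at $\alpha=1$ for part (ii), with the same coefficient arithmetic $\tfrac{\alpha}{4}=1-\tfrac{3}{4}\alpha=\tfrac14$ and the factoring of $\tfrac14$ out of the Berezin norm.
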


Next upper bound for the Davis-Wielandt-Berezin radius is as follows.

\begin{theorem}\label{theo10}
	Let  $A \in \mathcal{L}(\mathscr{H}).$ Then 
	\begin{eqnarray*}
		\eta^{2}(A) \leq   \min\{ \gamma_1(A),\gamma_2(A)\},
	\end{eqnarray*}
	where
	\begin{eqnarray*}
		\gamma_1 (A) \min_{ \alpha \in [0,1]} \left\| \alpha \left(\frac{|A|+|A^*|}{2}\right)^2+(1-\alpha)|A|^2+|A|^4 \right\|_{ber}
	\end{eqnarray*}
	and 
	\begin{eqnarray*}
		\gamma_2(A) =  \min_{ \alpha \in [0,1]} \left\| \alpha \left(\frac{|A|+|A^*|}{2}\right)^2+(1-\alpha)|A^*|^2+|A|^4 \right\|_{ber}.
	\end{eqnarray*}
\end{theorem}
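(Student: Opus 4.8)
The plan is to estimate the scalar quantity $|\langle A\hat{k}_{\lambda},\hat{k}_{\lambda}\rangle|^{2}+\|A\hat{k}_{\lambda}\|^4$ for a fixed normalized reproducing kernel $\hat{k}_{\lambda}$ and then take the supremum over $\lambda\in\Omega$, exactly as in the preceding theorems. The genuinely new ingredient is to manufacture the operator $\left(\frac{|A|+|A^*|}{2}\right)^2$ out of the diagonal term $|\langle A\hat{k}_{\lambda},\hat{k}_{\lambda}\rangle|$. To this end I would first record the auxiliary estimate
$$|\langle A\hat{k}_{\lambda},\hat{k}_{\lambda}\rangle|^{2}\leq\left\langle\left(\frac{|A|+|A^*|}{2}\right)^2\hat{k}_{\lambda},\hat{k}_{\lambda}\right\rangle.$$
This follows by applying Lemma \ref{lemma2} with $\alpha=\tfrac12$ and $x=y=\hat{k}_{\lambda}$ to get $|\langle A\hat{k}_{\lambda},\hat{k}_{\lambda}\rangle|^2\leq\langle|A|\hat{k}_{\lambda},\hat{k}_{\lambda}\rangle\langle|A^*|\hat{k}_{\lambda},\hat{k}_{\lambda}\rangle$, then bounding the product by $\left\langle\frac{|A|+|A^*|}{2}\hat{k}_{\lambda},\hat{k}_{\lambda}\right\rangle^2$ via the AM--GM inequality, and finally invoking Lemma \ref{lemma1} with $r=2$ applied to the positive operator $\frac{|A|+|A^*|}{2}$ to replace the square of the mean by the mean of the square.

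Next, for a fixed $\alpha\in[0,1]$ I would split $|\langle A\hat{k}_{\lambda},\hat{k}_{\lambda}\rangle|^{2}=\alpha|\langle A\hat{k}_{\lambda},\hat{k}_{\lambda}\rangle|^{2}+(1-\alpha)|\langle A\hat{k}_{\lambda},\hat{k}_{\lambda}\rangle|^{2}$, dominate the first summand by $\alpha\left\langle\left(\frac{|A|+|A^*|}{2}\right)^2\hat{k}_{\lambda},\hat{k}_{\lambda}\right\rangle$ through the auxiliary estimate, and dominate the second summand by the Cauchy--Schwarz inequality as $(1-\alpha)|\langle A\hat{k}_{\lambda},\hat{k}_{\lambda}\rangle|^{2}\leq(1-\alpha)\|A\hat{k}_{\lambda}\|^2=(1-\alpha)\langle|A|^2\hat{k}_{\lambda},\hat{k}_{\lambda}\rangle$. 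Adding the term $\|A\hat{k}_{\lambda}\|^4=\langle|A|^2\hat{k}_{\lambda},\hat{k}_{\lambda}\rangle^2$ and using Lemma \ref{lemma1} once more to write $\langle|A|^2\hat{k}_{\lambda},\hat{k}_{\lambda}\rangle^2\leq\langle|A|^4\hat{k}_{\lambda},\hat{k}_{\lambda}\rangle$, the three contributions collect into the single diagonal entry of the positive operator $\alpha\left(\frac{|A|+|A^*|}{2}\right)^2+(1-\alpha)|A|^2+|A|^4$. Since this operator is positive, its diagonal entries are bounded above by its Berezin norm (recall that $\|\cdot\|_{ber}=\textbf{ber}(\cdot)$ on positive operators), so taking the supremum over $\lambda$ and then the minimum over $\alpha\in[0,1]$ gives $\eta^2(A)\leq\gamma_1(A)$.

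For the companion bound $\eta^2(A)\leq\gamma_2(A)$ I would repeat the argument verbatim, changing only the Cauchy--Schwarz step for the second summand: writing $(1-\alpha)|\langle A\hat{k}_{\lambda},\hat{k}_{\lambda}\rangle|^{2}=(1-\alpha)|\langle\hat{k}_{\lambda},A^*\hat{k}_{\lambda}\rangle|^2\leq(1-\alpha)\|A^*\hat{k}_{\lambda}\|^2=(1-\alpha)\langle|A^*|^2\hat{k}_{\lambda},\hat{k}_{\lambda}\rangle$ produces $|A^*|^2$ in place of $|A|^2$, and the same collecting-and-supremum step yields the $\gamma_2$ estimate. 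Taking the smaller of the two bounds finishes the proof. I expect no serious obstacle, since each step rests on a lemma already available in the excerpt; the only point demanding care is the auxiliary estimate, where one must chain the mixed Schwarz inequality (Lemma \ref{lemma2}), AM--GM, and the power--monotonicity of Lemma \ref{lemma1} in the correct order, and must verify that the assembled operator is positive so that its diagonal entries are controlled by its Berezin norm.
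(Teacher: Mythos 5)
Your proposal is correct and follows essentially the same route as the paper: the same convex splitting $\alpha|\widetilde{A}(\lambda)|^2+(1-\alpha)|\widetilde{A}(\lambda)|^2$, the same chain of mixed Schwarz (Lemma \ref{lemma2} with $\alpha=\tfrac12$), AM--GM, and Lemma \ref{lemma1} with $r=2$ to produce $\left(\frac{|A|+|A^*|}{2}\right)^2$, and the same Cauchy--Schwarz treatment of the second summand giving $|A|^2$ or $|A^*|^2$. The only difference is organizational (you isolate the auxiliary estimate up front), and your worry about positivity is unnecessary since $|\langle T\hat{k}_{\lambda},\hat{k}_{\lambda}\rangle|\leq\|T\|_{ber}$ holds for any operator directly from the definition.
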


\begin{proof}
	 Let $\hat{k}_{\lambda}$ be a normalized reproducing kernel of $\mathscr{H}$ and $\alpha \in [0,1].$ Then using Cauchy-Schwarz inequality, we get
	\begin{eqnarray*}
		|\langle A\hat{k}_{\lambda},\hat{k}_{\lambda}\rangle|^{2} && \leq   \alpha |\langle A\hat{k}_{\lambda},\hat{k}_{\lambda}\rangle|^{2} + (1-\alpha) \|A\hat{k}_{\lambda}\|^2\\
		&& = \alpha |\langle A\hat{k}_{\lambda},\hat{k}_{\lambda}\rangle|^{2} + (1-\alpha) \langle |A|^2\hat{k}_{\lambda},\hat{k}_{\lambda}\rangle.
	\end{eqnarray*}
    Therefore, we get
    \begin{eqnarray*}
         	&& |\langle A\hat{k}_{\lambda},\hat{k}_{\lambda}\rangle|^{2}+\|A\hat{k}_{\lambda}\|^4\\
         	&& \leq  \alpha |\langle A\hat{k}_{\lambda},\hat{k}_{\lambda}\rangle|^{2} + (1-\alpha) \langle |A|^2\hat{k}_{\lambda},\hat{k}_{\lambda}\rangle+ \langle |A|^2\hat{k}_{\lambda},\hat{k}_{\lambda}\rangle^2\\
         	&& \leq \alpha \left( \langle |A|\hat{k}_{\lambda},\hat{k}_{\lambda}\rangle^{1/2}\langle |A^*|\hat{k}_{\lambda},\hat{k}_{\lambda}\rangle^{1/2}\right)^2 + (1-\alpha) \langle |A|^2\hat{k}_{\lambda},\hat{k}_{\lambda}\rangle+ \langle |A|^2\hat{k}_{\lambda},\hat{k}_{\lambda}\rangle^2\\
         	&&\,\,\,\,\,\,\,\,\,\,\,\,\,\,\,\,\,\,\,\,\,\,\,\,\,\,\,\,\,\,\,\,\,\,\,\,\,\,\,\,\,\,\,\,\,\,\,\,\,\,\,\,\,\,\,\,\,\,\,\,\,\,\,\,\,\,\,\,\,\,\,\,\,\,\,\,\,\,\,\,\,\,\,\,\,\,\,\,\,\,\,\,\,\,\,\,\,\,\,\,\,\,\,\,\,\,\,\,\,\,\,\,\,\,\,\,\,\,\,\,\,\,\,\,\,\,\,\,\,\,\,\,\,\,\,\,\,\,\,\Big(\mbox{by Lemma \ref{lemma2}}\Big)\\
         	&& \leq \alpha \left( \frac{\langle |A|\hat{k}_{\lambda},\hat{k}_{\lambda}\rangle + \langle |A^*|\hat{k}_{\lambda},\hat{k}_{\lambda}\rangle}{2}\right)^2 + (1-\alpha) \langle |A|^2\hat{k}_{\lambda},\hat{k}_{\lambda}\rangle+ \langle |A|^2\hat{k}_{\lambda},\hat{k}_{\lambda}\rangle^2\\
         	&& = \alpha  \left\langle \left(\frac{|A|+|A^*|}{2}\right)\hat{k}_{\lambda},\hat{k}_{\lambda}\right\rangle^2 + (1-\alpha) \langle |A|^2\hat{k}_{\lambda},\hat{k}_{\lambda}\rangle+ \langle |A|^2\hat{k}_{\lambda},\hat{k}_{\lambda}\rangle^2\\
         	&& \leq \alpha  \left\langle \left(\frac{|A|+|A^*|}{2}\right)^2\hat{k}_{\lambda},\hat{k}_{\lambda}\right\rangle + (1-\alpha) \langle |A|^2\hat{k}_{\lambda},\hat{k}_{\lambda}\rangle+ \langle |A|^4\hat{k}_{\lambda},\hat{k}_{\lambda}\rangle\\
         	&&\,\,\,\,\,\,\,\,\,\,\,\,\,\,\,\,\,\,\,\,\,\,\,\,\,\,\,\,\,\,\,\,\,\,\,\,\,\,\,\,\,\,\,\,\,\,\,\,\,\,\,\,\,\,\,\,\,\,\,\,\,\,\,\,\,\,\,\,\,\,\,\,\,\,\,\,\,\,\,\,\,\,\,\,\,\,\,\,\,\,\,\,\,\,\,\,\,\,\,\,\,\,\,\,\,\,\,\,\,\,\,\,\,\,\,\,\,\,\,\,\,\,\,\,\,\,\,\,\,\,\,\,\,\,\,\,\,\,\,\Big(\mbox{by Lemma \ref{lemma1}}\Big)\\
         	&& =  \left\langle \left(\alpha \left(\frac{|A|+|A^*|}{2}\right)^2+(1-\alpha)|A|^2+|A|^4\right)\hat{k}_{\lambda},\hat{k}_{\lambda}\right\rangle \\
         	&& \leq   \left\| \alpha \left(\frac{|A|+|A^*|}{2}\right)^2+(1-\alpha)|A|^2+|A|^4 \right\|_{ber}.
    \end{eqnarray*}
     Therefore, taking supremum over all $\lambda\in\Omega$, we get
      \begin{eqnarray}\label{eqn21}
     \eta^2(A) \leq  \left\| \alpha \left(\frac{|A|+|A^*|}{2}\right)^2+(1-\alpha)|A|^2+|A|^4 \right\|_{ber}.
     \end{eqnarray}
      As the inequality holds for all $\alpha \in [0,1],$ taking minimum over all $\alpha \in [0,1]$ we get
     \begin{eqnarray}\label{eqn22}
     \eta^2(A) \leq  \min_{ \alpha \in [0,1]} \left\| \alpha \left(\frac{|A|+|A^*|}{2}\right)^2+(1-\alpha)|A|^2+|A|^4 \right\|_{ber}.
     \end{eqnarray}
      Again using Cauchy-Schwarz inequality, we also get
     \begin{eqnarray*}
     	|\langle A\hat{k}_{\lambda},\hat{k}_{\lambda}\rangle|^{2} && \leq   \alpha |\langle A\hat{k}_{\lambda},\hat{k}_{\lambda}\rangle|^{2} + (1-\alpha) \|A^*\hat{k}_{\lambda}\|^2\\
     	&& = \alpha |\langle A\hat{k}_{\lambda},\hat{k}_{\lambda}\rangle|^{2} + (1-\alpha) \langle |A^*|^2\hat{k}_{\lambda},\hat{k}_{\lambda}\rangle.
     \end{eqnarray*}
     Proceeding similarly as above, we also get
     \begin{eqnarray}\label{eqn23}
     \eta^2(A) \leq  \min_{ \alpha \in [0,1]}\left\| \alpha \left(\frac{|A|+|A^*|}{2}\right)^2+(1-\alpha)|A^*|^2+|A|^4 \right\|_{ber}. 
     \end{eqnarray}
     Therefore, combining (\ref{eqn22}) and (\ref{eqn23}) we get the desired result.
\end{proof}

Finally, we obtain an upper bound for the Davis–Wielandt-Berezin radius of the sum of two bounded linear operators.

\begin{theorem}\label{th}
	Let  $A,B \in \mathcal{L}(\mathscr{H}).$ Then
	\begin{eqnarray*}
		\eta(A+B) \leq \eta(A)+\eta(B)+\textbf{ber}(A^*B+B^*A).
	\end{eqnarray*}
\end{theorem}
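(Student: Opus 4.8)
The plan is to view the Davis--Wielandt--Berezin radius as a supremum of Euclidean norms of points in $\mathbb{C}\times\mathbb{R}$ and then to apply the ordinary triangle inequality. For a fixed normalized reproducing kernel $\hat{k}_\lambda$, I associate to each operator $T\in\mathcal{L}(\mathscr{H})$ the point $P_T(\lambda)=\big(\widetilde{T}(\lambda),\,\|T\hat{k}_\lambda\|^2\big)\in\mathbb{C}\times\mathbb{R}$, whose Euclidean norm $\|P_T(\lambda)\|=\sqrt{|\widetilde{T}(\lambda)|^2+\|T\hat{k}_\lambda\|^4}$ is exactly the quantity whose supremum over $\lambda$ defines $\eta(T)$. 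In particular $\|P_A(\lambda)\|\le\eta(A)$ and $\|P_B(\lambda)\|\le\eta(B)$ for every $\lambda\in\Omega$.

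First I would compute the two coordinates of $P_{A+B}(\lambda)$. The first coordinate is linear, $\widetilde{(A+B)}(\lambda)=\widetilde{A}(\lambda)+\widetilde{B}(\lambda)$. For the second coordinate, expanding $(A+B)^*(A+B)=A^*A+B^*B+(A^*B+B^*A)$ and reading off the Berezin symbol gives
$$\|(A+B)\hat{k}_\lambda\|^2=\|A\hat{k}_\lambda\|^2+\|B\hat{k}_\lambda\|^2+\widetilde{(A^*B+B^*A)}(\lambda),$$
the cross term arising from $2\Re\langle A\hat{k}_\lambda,B\hat{k}_\lambda\rangle=\langle(A^*B+B^*A)\hat{k}_\lambda,\hat{k}_\lambda\rangle$. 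Consequently $P_{A+B}(\lambda)$ decomposes as
$$P_{A+B}(\lambda)=P_A(\lambda)+P_B(\lambda)+\big(0,\ \widetilde{(A^*B+B^*A)}(\lambda)\big),$$
that is, as the sum of $P_A(\lambda)$, $P_B(\lambda)$, and a correction vector supported entirely in the second coordinate.

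Then the triangle inequality in $\mathbb{C}\times\mathbb{R}$ yields
$$\|P_{A+B}(\lambda)\|\le\|P_A(\lambda)\|+\|P_B(\lambda)\|+\big|\widetilde{(A^*B+B^*A)}(\lambda)\big|.$$
Since $A^*B+B^*A$ is self-adjoint, its Berezin symbol is real, so the last term is bounded by $\textbf{ber}(A^*B+B^*A)$, while the first two are bounded by $\eta(A)$ and $\eta(B)$. Taking the supremum over $\lambda\in\Omega$ gives the claim. The argument reduces to a single application of the triangle inequality once the correct geometric picture is in place; the only point requiring genuine care is the algebraic identification of $\|(A+B)\hat{k}_\lambda\|^2-\|A\hat{k}_\lambda\|^2-\|B\hat{k}_\lambda\|^2$ with the Berezin symbol of $A^*B+B^*A$, together with the observation that this symbol is real, so that $|\cdot|$ (rather than a coarser estimate) furnishes exactly the Berezin number term. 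No obstacle of real substance is expected.
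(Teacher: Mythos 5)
Your proof is correct and is essentially the paper's own argument: the authors write the same decomposition $\big(\widetilde{(A+B)}(\lambda),\|(A+B)\hat{k}_\lambda\|^2\big)=\big(\widetilde{A}(\lambda),\|A\hat{k}_\lambda\|^2\big)+\big(\widetilde{B}(\lambda),\|B\hat{k}_\lambda\|^2\big)+\big(0,\langle(A^*B+B^*A)\hat{k}_\lambda,\hat{k}_\lambda\rangle\big)$ as a containment of Davis--Wielandt--Berezin shells and then invoke it to conclude. Your version merely makes explicit the final triangle-inequality step that the paper leaves as ``this implies the desired inequality,'' so there is nothing to add.
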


\begin{proof}
	Let  $\hat{k}_{\lambda}$ be the normalized reproducing kernel of $\mathscr{H}.$ Then using the definition of Davis-Wielandt-Berezin shell, we have
	\begin{eqnarray*}
		&& DW_{ber}(A+B)\\
		&& =\left\{\Big(\widetilde{A+B}(\lambda), \|(A+B)\hat{k}_{\lambda}\|^2\Big)~:~\lambda \in \Omega \right\}\\
		&& =\left\{\Big(\widetilde{A}(\lambda)+\widetilde{B}(\lambda), \langle(A+B)\hat{k}_{\lambda}, (A+B)\hat{k}_{\lambda}\rangle\Big)~:~\lambda \in \Omega\right\}\\
		&& =\left\{\Big(\widetilde{A}(\lambda)+\widetilde{B}(\lambda), \|A\hat{k}_{\lambda}\|^2+\|B\hat{k}_{\lambda}\|^2+ \langle(A^*B+B^*A)\hat{k}_{\lambda},\hat{k}_{\lambda}\rangle\Big)~:~\lambda \in \Omega\right\}\\
		&& =\left\{\Big(\widetilde{A}(\lambda), \|A\hat{k}_{\lambda}\|^2\Big)+\Big(\widetilde{B}(\lambda), \|B\hat{k}_{\lambda}\|^2\Big)+\Big(0, \langle(A^*B+B^*A)\hat{k}_{\lambda},\hat{k}_{\lambda}\rangle\Big)~:~\lambda \in \Omega \right\}\\
		&& \subseteq DW_{ber}(A)+ DW_{ber}(B)+ U,
	\end{eqnarray*}
	where $U=\left\{\Big(0, \langle(A^*B+B^*A)\hat{k}_{\lambda},\hat{k}_{\lambda}\rangle\Big)~:~\lambda \in \Omega \right\}.$
	This implies the desired inequality.
\end{proof}

\begin{remark}

It is easy to observe that $\eta(A+B) \leq \eta(A)+\eta(B),$ if
$Re \langle A\hat{k}_{\lambda},B\hat{k}_{\lambda} \rangle=0$  for all normalized reproducing kernel $\hat{k}_{\lambda}$ of $\mathscr{H}.$ 

\end{remark}

\bibliographystyle{amsplain}
	
\end{document}